\documentclass[a4paper,11t]{article}
\usepackage[top=2.54cm,bottom=2.54cm, left=2cm, right=2cm]{geometry}

\usepackage{amsmath}
\usepackage{amsfonts}
\usepackage{amssymb}
\usepackage{graphicx}
\usepackage{bm}
\usepackage{enumerate}
\usepackage{color}
\usepackage[sort,comma]{natbib}
\usepackage{float}
\usepackage[colorlinks=true, allcolors=blue]{hyperref}
\usepackage{changepage}
\usepackage{dirtytalk}
\usepackage{pdflscape}
\usepackage{amsthm}
\usepackage{booktabs}

\newtheorem{theorem}{Theorem}[section]  % numbered within sections
\newtheorem{lemma}{Lemma}[section]

\theoremstyle{definition}

\theoremstyle{remark}

\newcommand{\dotsim}{\mathrel{\dot{\sim}}}

\title{On the generalized circular projected Cauchy distribution}
\author{Omar Alzeley and Michail Tsagris\\
\\
Department of Mathematics, Umm Al-Qura University, \\
Al-Qunfudah University College, Saudi Arabia \\ 
\href{mailto:oazeley@uqu.edu.sa}{oazeley@uqu.edu.sa} \\
Department of Economics, University of Crete, \\
Gallos Campus, Rethymnon, Greece \\ 
\href{mailto:mtsagris@uoc.gr}{mtsagris@uoc.gr}
}
\begin{document}

\maketitle

\begin{center}
\textbf{Abstract}
\end{center}
\cite{tsagris2025a} proposed the generalized circular projected Cauchy (GCPC) distribution, whose special case is the wrapped Cauchy distribution. In this paper we first derive the relationship with the wrapped Cauchy distribution, and then we attempt to characterize the distribution. We establish the conditions under which the distribution exhibits unimodality. We provide non-analytical formulas for the mean resultant length and the Kullback-Leibler divergence, and analytical form for the cumulative probability function and the entropy of the GCPC distribution. We propose log-likelihood ratio tests for one, or two location parameters without assuming equality of the concentration parameters. We revisit maximum likelihood estimation with and without predictors. In the regression setting we briefly mention the addition of circular and simplicial predictors. Simulation studies illustrate a) the performance of the log-likelihood ratio test when one falsely assumes that the true distribution is the wrapped Cauchy distribution, and b) the empirical rate of convergence of the regression coefficients. Using a real data analysis example we show how to avoid the log-likelihood being trapped in a local maximum and we correct a mistake in the regression setting. \\
\\
\textbf{Keywords:} Circular data; distribution, hypothesis test, maximum likelihood estimation
\\
\\
MSC: 62H11, 62H10
\section{Introduction}
Directional data refers to multivariate data with a unit norm, whose sample space can be expressed as:
\begin{eqnarray*}
\mathbb{S}^{d-1}=\left\lbrace {\bf x} \in \mathbb{R}^d \bigg\vert \left|\left|{\bf x}\right|\right|=1 \right\rbrace,
\end{eqnarray*}
where $\left|\left|.\right|\right|$ denotes the Euclidean norm. Circular data, when $d=2$, lie on a circle. Circular data are encountered in various disciplines, such as political sciences \citep{gill2010}, criminology \citep{shirota2017}, biology \citep{landler2018}, ecology \citep{horne2007}, and astronomy \citep{soler2019}, to name but a few.

A large class of distributions has been proposed, see \cite{tsagris2025a} for a short list. A classic distribution is the wrapped Cauchy distribution \citep{kent1988}, for which \cite{tsagris2025a} showed to be a special case of the generalized projected Cauchy distribution (GCPC). The GCPC generalizes the WC by the introduction of an extra parameter that allows for anisotropy. The benefit of the GCPC distribution is that it provides a better fit to asymmetric and bimodal data.

In this paper we focus on the GCPC distribution. Specifically we derive the relationship with the wrapped Cauchy (WC) distribution. We examine the conditions for unimodality and then provide an alternative formula for the cumulative probability function. We derive non-analytical formulas for its mean resultant length and the Kullback-Leibler divergence from the WC distribution, but derive analytical formula for its entropy. We further propose two log-likelihood ratio tests for equality of one and two location parameters, without assuming equal concentration parameters. We revisit the maximum likelihood estimation (MLE) and the regression modelling. For the MLE we show a problem that can trap the log-likelihood in a local maximum and show how to easily escape and reach the global maximum. We further correct a mistake of \cite{tsagris2025a} and examine empirically the convergence rate of the regression coefficients. A real data analysis, with and without predictor variables illustrates the superior performance of the GCPC compared to the WC distribution. 

The next section briefly presents the GCPC distribution, and studies the distribution. Next, simulation studies and the real data example follow, with conclusions closing the paper.

\section{The GCPC distribution}
Suppose a $d$-dimensional random variable ${\bf X}$ follows some multivariate distribution defined over $\mathbb{R}^d$ and we project it onto the circle/sphere/hyper-sphere, ${\bf Y}=\frac{{\bf X}}{r}$, where $r=\left|\left|{\bf X}\right|\right|$. The marginal distribution of ${\bf Y}$, which is of interest, is obtained by integrating out $r$ over the positive line
\begin{eqnarray} \label{projden}
f({\bf y})=\int_0^{\infty}r^{d-1}f(r{\bf y})dr.
\end{eqnarray}

The probability density function of the bivariate Cauchy distribution, with some location vector $\pmb{\mu}$ and scatter matrix $\pmb{\Sigma}$, is given by
\begin{eqnarray} \label{bivt2}
f({\bf x})=\frac{1}{2\pi|\pmb{\Sigma}|^{1/2}}\left[1 + \left({\bf x}-\pmb{\mu}\right)^\top\pmb{\Sigma}^{-1}\left({\bf x}-\pmb{\mu}\right)\right]^{-3/2}.
\end{eqnarray}

By substituting (\ref{bivt2}) into (\ref{projden}) and evaluating the integral, \cite{tsagris2025a} derived the circular projected Cauchy (CPC) distribution.
\begin{eqnarray} \label{pc}
f({\bf y}) &=& \int_0^{\infty}\frac{r}{2\pi|\pmb{\Sigma}|^{1/2}}\left(1 + r^2{\bf y}^\top\pmb{\Sigma}^{-1}{\bf y}-2r{\bf y}^\top\pmb{\Sigma}^{-1}\pmb{\mu}+\pmb{\mu}^\top\pmb{\Sigma}^{-1}\pmb{\mu}\right)^{-3/2}dr \nonumber \\
&=& \frac{1}{2\pi|\pmb{\Sigma}|^{1/2}\left(B\sqrt{\Gamma^2+1}-A\sqrt{B}\right)},
\end{eqnarray}
where 
\begin{equation*}
A={\bf y}^\top\pmb{\Sigma}^{-1}\pmb{\mu}, \
B={\bf y}^\top\pmb{\Sigma}^{-1}{\bf y}, \ \text{and} \ \Gamma^2=\pmb{\mu}^\top\pmb{\Sigma}^{-1}\pmb{\mu}.
\end{equation*}
It is important to note that ${\bf y} \in \mathbb{S}^1$, while $\bm{\mu} \in \mathbb{R}^2$.

\citep{tsagris2025a} relaxed this strict assumption by employing one of the conditions imposed in \cite{paine2018}, that is, $\pmb{\Sigma \mu} = \pmb{\mu}$, but not $|\pmb{\Sigma}|=1$. This condition implies that the one eigenvector $\pmb{\xi}_2$ of $\pmb{\Sigma}$ is the normalised location vector $\pmb{\xi}_2=\left(\mu_1,\mu_2\right)^\top/\gamma$, while the other eigenvector can be defined up to the sign as $\pmb{\xi}_1=\left(-\mu_2, \mu_1\right)^\top/\gamma$ or $\pmb{\xi}_1=\left(\mu_2, -\mu_1\right)^\top/\gamma$. The eigenvalue corresponding to the location vector is equal to 1, while the other eigenvalue is equal to $\lambda$, hence $|\pmb{\Sigma}|=\lambda > 0$ and the inverse of the scatter matrix is given by
\begin{eqnarray} \label{sinv}
\pmb{\Sigma}^{-1}=\frac{1}{\gamma^2}\left(
\begin{array}{cc}
\mu_1^2+\mu_2^2/\lambda & \mu_1\mu_2\left(1-1/\lambda\right) \\
\mu_1\mu_2\left(1-1/\lambda\right) & \mu_2^2+\mu_1^2/\lambda
\end{array}
\right) = \pmb{\xi}_1\pmb{\xi}_1^\top/\lambda + \pmb{\xi}_2\pmb{\xi}_2^\top.
\end{eqnarray}

Thus, (\ref{pc}) becomes
\begin{eqnarray} \label{gcpc}
f({\bf y})= \frac{1}{2\pi\lambda^{1/2}\left(B\sqrt{\gamma^2+1}-a\sqrt{B}\right)}.
\end{eqnarray}

Utilising (\ref{sinv}) and after some calculations, the density in (\ref{gcpc}) may also be expressed in polar coordinates by
\begin{eqnarray} \label{gcpc2}
f(\theta) &=& \frac{\left(2\pi\lambda^{1/2}\right)^{-1}}{\left[\left(\cos^2(\theta-\omega)+\frac{\sin^2(\theta-\omega)}{\lambda}\right)\sqrt{\gamma^2+1}-\gamma\cos(\theta-\omega)\sqrt{\cos^2(\theta-\omega)+\frac{\sin^2(\theta-\omega)}{\lambda}}\right]} \nonumber \\
&=& \frac{1}{2\pi\lambda^{1/2}\left(b\sqrt{\gamma^2+1}-a\sqrt{b}\right)},
\end{eqnarray}
where    
$a= \gamma\cos\phi, \ b = \cos^2\phi + \frac{\sin^2\phi}{\lambda}, \ \phi = \theta - \omega$, with \(\lambda>0\), \(\gamma\ge0\) and \(\omega\in[-\pi,\pi]\).

We shall denote the eigenvalue, $\lambda$, of the covariance matrix $\bm{\Sigma}$ by anisotropy parameter, and the reason is explained in the next subsection. The GCPC distribution exhibits reflective symmetry with respect to $\omega$ only if $\lambda=1$, but its density function is even since $f(\theta-\omega)=f(\omega-\theta)$. The maximum value of the density occurs when $\theta=\omega$ and its value is $\left[2\pi\sqrt{\lambda}\left(\sqrt{\gamma^2+1}-\gamma\right)\right]^{-1}$. The density of the GCPC may be re-written as
\begin{eqnarray} \label{gcpc3}
f(\theta) = \frac{1}{2\pi\sqrt{\lambda}\sqrt{b}D}=\frac{1}{2\pi\sqrt{1+\left(\lambda-1\right)\cos^2{\phi}}\cdot D},
\end{eqnarray}
where
\begin{equation*}
D = c\sqrt{b} - \gamma\cos\phi \ \text{and} \ c = \sqrt{\gamma^2 + 1}.
\end{equation*}

It is important to note that if the anisotropy parameter, $\lambda=1$, the GCPC distribution reduces to the circular independent projected Cauchy (CIPC) distribution \citep{tsagris2025b}
\begin{eqnarray} \label{cipc}
f_{CIPC}({\bf \theta}) =\dfrac{1}{2{\pi}\left(\sqrt{\gamma^2+1}-\gamma\cos{\left(\theta-\omega\right)}\right)},
\end{eqnarray}
which is the WC distribution \citep[pg.~51]{mardia2000} with a different parameterization
\begin{eqnarray} \label{wc}
f_{WC}(\theta)=\frac{1-\delta^2}{2\pi\left[1+\delta^2-2\delta\cos\left(\theta-\omega\right)\right]},
\end{eqnarray}
where $\gamma=\frac{2\delta}{1-\delta^2}$ or, conversely, $\delta=(\sqrt{\gamma^2+1}-1)/\gamma$ \citep{tsagris2025a}.

As the name suggests, $\lambda$ controls the anisotropy of the GCPC, and if $\lambda=1$ we end up with an isotropic covariance matrix $\bm{\Sigma}$. 

\subsection{Relationship with the CIPC distribution}
\begin{theorem} \label{relation}
If $\phi$ follows the GCPC distribution, GCPC$(\omega,\gamma,\lambda)$, $\psi = \arctan2{\left(\sin{\phi},\sqrt{\lambda}\cos{\phi}\right)}$ follows the CIPC distribution, GCPC$(0,\gamma,1) \equiv $CIPC$(0,\gamma) \equiv WC(0,\delta)$, where
$\arctan2(y,x)$ is the two-argument arc-tangent:
\begin{eqnarray} \label{atan2}
\arctan2(y, x) = \left\lbrace
\begin{array}{ll}
\arctan(y/x) & \text{if} \ x>0, \\
\arctan(y/x) + \pi & \text{if} \ x<0 \ \text{and} \ y \geq 0, \\
\arctan(y/x) - \pi & \text{if} \ x<0 \ \text{and} \ y<0, \\
\pi/2 & \text{if} \ x=0 \ \text{and} \ y>0, \\
-\pi/2 & \text{if} \ x=0 \ \text{and} \ y<0, \\
\text{undefined} & \text{if} \ x=y=0.
\end{array} \right.
\end{eqnarray} 
\end{theorem}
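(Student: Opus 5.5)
The plan is a direct change of variables on the circle. The density of ${\bf Y}$ in (\ref{gcpc2}) depends on $\theta$ only through $\phi=\theta-\omega$, so I take $\phi$ to have density $f(\phi)=\left[2\pi\lambda^{1/2}\left(b\,c-\gamma\cos\phi\sqrt{b}\right)\right]^{-1}$, with $b=\cos^2\phi+\sin^2\phi/\lambda$ and $c=\sqrt{\gamma^2+1}$. I will show that the map $\phi\mapsto\psi=\arctan2(\sin\phi,\sqrt{\lambda}\cos\phi)$ is a smooth, strictly increasing bijection of $(-\pi,\pi]$ onto itself. Then I will compute the density of $\psi$ as $f(\phi(\psi))\,|d\phi/d\psi|$ and compare it with (\ref{cipc}) at $\omega=0$.

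\emph{Step 1 (bijectivity and trigonometric identities).} The point $(\sqrt{\lambda}\cos\phi,\sin\phi)$ is never the origin. It traverses the ellipse with semi-axes $\sqrt\lambda$ and $1$ once, counter-clockwise, as $\phi$ runs over the circle. By (\ref{atan2}), $\psi$ is its polar angle, so the map is a bijection of the circle. Writing $R=\sqrt{\lambda\cos^2\phi+\sin^2\phi}=\sqrt{\lambda}\sqrt{b}$, we have
\begin{equation*}
\cos\psi=\frac{\sqrt{\lambda}\cos\phi}{R}=\frac{\cos\phi}{\sqrt b},\qquad \sin\psi=\frac{\sin\phi}{\sqrt{\lambda}\sqrt b}.
\end{equation*}
These identities let me express everything in $\psi$ without explicitly inverting the map.

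\emph{Step 2 (Jacobian).} Away from $\cos\phi=0$, $\tan\psi=\tan\phi/\sqrt{\lambda}$. Differentiating gives
\begin{equation*}
\frac{d\psi}{d\phi}=\frac{\sec^2\phi/\sqrt\lambda}{1+\tan^2\phi/\lambda}=\frac{\sqrt{\lambda}}{\lambda\cos^2\phi+\sin^2\phi}=\frac{1}{\sqrt{\lambda}\,b}>0.
\end{equation*}
The right-hand side is continuous and positive everywhere, so the formula extends to $\cos\phi=0$, and monotonicity follows. This is the step that needs the most care: $\arctan2$ is piecewise, so I would confirm that the branch jumps in (\ref{atan2}) only make $\psi$ continuous as a point on the circle. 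That holds because $\psi$ is the continuous polar angle of a curve avoiding the origin, with jumps exactly multiples of $2\pi$.

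\emph{Step 3 (density of $\psi$).} The density of $\psi$ is
\begin{equation*}
g(\psi)=f(\phi)\Big/\frac{d\psi}{d\phi}=\frac{\sqrt{\lambda}\,b}{2\pi\sqrt{\lambda}\left(b\,c-\gamma\cos\phi\sqrt b\right)}=\frac{1}{2\pi\left(c-\gamma\cos\phi/\sqrt b\right)}=\frac{1}{2\pi\left(\sqrt{\gamma^2+1}-\gamma\cos\psi\right)},
\end{equation*}
using $\cos\phi/\sqrt b=\cos\psi$ from Step 1. This is exactly (\ref{cipc}) with $\omega=0$, that is, GCPC$(0,\gamma,1)\equiv$ CIPC$(0,\gamma)$. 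It equals $WC(0,\delta)$ with $\delta=(\sqrt{\gamma^2+1}-1)/\gamma$ by the reparametrisation stated after (\ref{wc}).
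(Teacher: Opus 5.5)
Your proposal is correct and follows essentially the same route as the paper. Both use the same change of variables, the identities $\cos\psi=\cos\phi/\sqrt{b}$ and $\sin\psi=\sin\phi/\sqrt{\lambda b}$, the Jacobian $|d\phi/d\psi|=\sqrt{\lambda}\,b$ obtained by differentiating $\tan\psi=\tan\phi/\sqrt{\lambda}$, and the same final simplification to the CIPC$(0,\gamma)$ density. Your explicit check that the map is a continuous, strictly increasing bijection of the circle, including at $\cos\phi=0$, is a detail the paper leaves implicit.
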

\begin{proof}
The proof is straightforward by application of the  change-of-variables formula. Without loss of generality, set $\omega=0$. Under the transformation $\psi = \arctan2(\sin\varphi, \sqrt{\lambda}\cos\varphi)$
we can see that
\begin{equation*}
\cos\psi = \frac{\cos\varphi}{\sqrt{b}}, \qquad
\sin\psi = \frac{\sin\varphi}{\sqrt{\lambda b}},
\end{equation*}
where $b = \cos^2\varphi + \sin^2\varphi/\lambda$. Differentiating $\tan\psi = \tan\varphi/\sqrt{\lambda}$
gives $|d\varphi/d\psi| = \sqrt{\lambda}\,b$. Applying the change-of-variables formula to
$f_\Phi(\varphi) = (2\pi\sqrt{\lambda\,b}\,D)^{-1}$, with $D = c\sqrt{b}-\gamma\cos\varphi$
and $c=\sqrt{\gamma^2+1}$, we obtain:
\begin{equation*}
f_\Psi(\psi)
\;=\; f_\Phi(\varphi)\cdot\sqrt{\lambda}b
\;=\; \frac{\sqrt{b}}{2\pi D}
\;=\; \frac{\sqrt{b}}{2\pi(c\sqrt{b}-\gamma\sqrt{b}\cos\psi)}
\;=\; \frac{1}{2\pi(\sqrt{\gamma^2+1}-\gamma\cos\psi)},
\end{equation*}
where in the last step we used $\cos\varphi = \sqrt{b}\cos\psi$. This is the
$\mathrm{CIPC}(0,\gamma)\equiv\mathrm{WC}(0,\delta)$ density.
\end{proof}

Based on this we can define the opposite transformation as follows:
\begin{lemma}
If $\psi$ follows the CIPC distribution, CIPC$(\omega,\gamma)$, then $\phi = \arctan2{\left(\sqrt{\lambda}\sin{\psi},\cos{\psi}\right)}$ follows the GCPC distribution, GCPC$(0,\gamma,\lambda)$. 
\end{lemma}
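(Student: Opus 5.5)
Since the statement mixes CIPC$(\omega,\gamma)$ with GCPC$(0,\gamma,\lambda)$, I will first reduce to $\omega=0$. If $\psi\sim$ CIPC$(\omega,\gamma)$, I apply the map to $\psi-\omega$; with that reading the conclusion is GCPC$(0,\gamma,\lambda)$ exactly as in Theorem \ref{relation}. (For general $\omega$ the map does not commute with rotation unless $\lambda=1$, so the centred version is the intended statement.) From here on I take $\omega=0$, so that $f_\Psi(\psi)=\left[2\pi\left(c-\gamma\cos\psi\right)\right]^{-1}$ with $c=\sqrt{\gamma^2+1}$.

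The key observation is that $\phi=\arctan2(\sqrt{\lambda}\sin\psi,\cos\psi)$ is exactly the inverse of the map in Theorem \ref{relation}. Both maps are bijections of $(-\pi,\pi]$ that preserve the quadrant: the sign of $\cos$ is kept and the sign of $\sin$ is kept, because $\lambda>0$. I verify this by checking that $\tan\phi=\sqrt{\lambda}\tan\psi$ inverts $\tan\psi=\tan\phi/\sqrt{\lambda}$, and that the signs of $(\cos,\sin)$ match in each quadrant. Given this, the quickest proof is the following. Let $\phi\sim$ GCPC$(0,\gamma,\lambda)$. Theorem \ref{relation} says $T(\phi)\sim$ CIPC$(0,\gamma)$, where $T$ denotes the Theorem's map. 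Since $T^{-1}$ is our map, $T^{-1}(\psi)\stackrel{d}{=}T^{-1}(T(\phi))=\phi$, which gives the claim.

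For a self-contained check, I will also do the change of variables directly. Set $r=\sqrt{\cos^2\psi+\lambda\sin^2\psi}$. Then $\cos\phi=\cos\psi/r$ and $\sin\phi=\sqrt{\lambda}\sin\psi/r$. Differentiating $\tan\phi=\sqrt{\lambda}\tan\psi$ gives $d\phi/d\psi=\sqrt{\lambda}/r^2$, so $f_\Phi(\phi)=f_\Psi(\psi)\,r^2/\sqrt{\lambda}$. Next I express $r$ in terms of $\phi$. From $b=\cos^2\phi+\sin^2\phi/\lambda=(\cos^2\psi+\sin^2\psi)/r^2=1/r^2$ we get $r=1/\sqrt{b}$ and $\cos\psi=\cos\phi/\sqrt{b}$. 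Substituting, I obtain
\[
f_\Phi(\phi)=\frac{1}{2\pi\sqrt{\lambda}\,b\left(c-\gamma\cos\phi/\sqrt{b}\right)}=\frac{1}{2\pi\sqrt{\lambda}\left(bc-\gamma\cos\phi\sqrt{b}\right)},
\]
which is (\ref{gcpc2}) with $\omega=0$.

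The only delicate step is the bijectivity and branch bookkeeping of $\arctan2$: one must ensure that no mass is lost and that the derivative formula holds across $\cos\psi=0$. I will handle this by noting that the map is a smooth, strictly increasing circle diffeomorphism, since its derivative $\sqrt{\lambda}/r^2$ is positive everywhere. The points $\cos\psi=0$ are fixed by the map up to sign and have measure zero.
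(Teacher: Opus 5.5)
Your argument is correct and follows the route the paper intends; the paper omits the proof as a straightforward change of variables that inverts the map of Theorem~\ref{relation}, and both your inverse-map argument and your direct computation (Jacobian $\sqrt{\lambda}/r^2$, $r^2=1/b$) recover (\ref{gcpc2}) at $\omega=0$. You are also right that the statement only holds with $\omega=0$, or with $\psi-\omega$ in place of $\psi$: for $\lambda=1$ the map is the identity and sends CIPC$(\omega,\gamma)$ to itself, not to GCPC$(0,\gamma,1)$.
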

\begin{proof}
The proof is again straightforward and hence omitted.   
\end{proof}

\subsection{Unimodality conditions for the GCPC distribution}
Tsagris and Alzeley (2025) observed that the density function of the GCPC (\ref{gcpc2}) may be bimodal. They equated the derivative of the log-density to zero and obtained:
\begin{eqnarray} \label{der0}
\sin(\theta - \omega)\left[2(\lambda-1)\sqrt{\gamma^2+1}\cos(\theta-\omega)
\sqrt{\frac{(\lambda-1)\cos^2(\theta-\omega)+1}{\lambda}}- 2(\lambda-1)\gamma\cos^2(\theta-\omega) - \gamma \right] &=& 0.
\end{eqnarray}

This yields two cases: either $\sin(\theta - \omega) = 0$ or the expression in brackets equals zero.
\noindent \textbf{Case 1}: $\sin(\theta - \omega) = 0$.
\begin{equation*}
\theta - \omega = k\pi \implies \theta = \omega + k\pi, \ \
k \in \mathbb{Z}.
\end{equation*}
 
\noindent\textbf{Case 2}: The term inside the brackets equals zero.
Let $u = \cos(\theta-\omega)$:
\begin{eqnarray*}
2(\lambda-1)\sqrt{\gamma^2+1} u\sqrt{\frac{(\lambda-1)u^2+1}{\lambda}}
&=& \gamma\left(2(\lambda-1)u^2 + 1\right) \\
4(\lambda-1)^2(\gamma^2+1)\,u^2\frac{(\lambda-1)u^2+1}{\lambda}
&=& \gamma^2\!\left(2(\lambda-1)u^2+1\right)^2 \ \ \text{(By squaring both sides)}.
\end{eqnarray*}
Letting $t = u^2 = \cos^2(\theta-\omega)$ and multiplying both sides with $\lambda$ yields:
\begin{eqnarray*}
4(\lambda-1)^3(\gamma^2+1)\,t^2 + 4(\lambda-1)^2(\gamma^2+1)t
&=& 4\lambda\gamma^2(\lambda-1)^2t^2+ 4\lambda\gamma^2(\lambda-1)+ \lambda\gamma^2. \\  
4(\lambda-1)^2M t^2 + 4(\lambda-1)M t - \lambda\gamma^2 &=& 0,
\end{eqnarray*}
where $M = \lambda - \gamma^2 - 1$. Hence,
\begin{equation*}
t^* = \cos^2(\theta-\omega)
= \frac{-M \pm \sqrt{M\left(M + \lambda\gamma^2\right)}}{2(\lambda-1)M}.
\end{equation*}
Finally,
\begin{equation*}
\theta = \omega \pm \arccos\!\sqrt{t^*}.
\end{equation*}
\textbf{Notes}.
\begin{itemize}
\item If $M = 0 \implies \lambda = \gamma^2 + 1$, the quadratic degenerates and requires $\gamma = 0$. 
\item The term inside the square term of $t^*$ expands as 
$M\left(M+\lambda \gamma^2\right)=\left(\lambda-\gamma^2-1\right)\left[\left(\lambda-1\right)\left(\gamma^2+1\right)\right]$. To ensure that this is non-negative two conditions apply:
\begin{itemize}
\item If $\lambda < 1$ then $\lambda < \gamma^2 + 1$.
\item If $\lambda > 1$ then $\lambda > \gamma^2 + 1$.
\end{itemize} 
\item If $\lambda=1$, the term inside the brackets in (\ref{der0}) vanishes and we end up with \textbf{Case 1} (unimodality of the distribution).
\item If $\lambda \neq 1$, then the following four cases apply:
\begin{itemize}
\item \textbf{Case A: $1 < \lambda \leq \gamma^2 + 1$}.
The distribution is always unimodal since $M(\lambda-1) \leq 0$ gives complex roots, with no further conditions needed.
\item \textbf{Case B: $\lambda > \gamma^2 + 1$}.
The distribution is unimodal when $t^*_+ > 1$, i.e. when:
\begin{eqnarray*} 
(\gamma^2+1)(\lambda-1) > (\lambda - \gamma^2 - 1)(2\lambda-1)^2.
\end{eqnarray*}
\item \textbf{Case C: $\frac{1}{2} < \lambda < 1$}.
The distribution is unimodal when $t^*_- > 1$, i.e. when:
\begin{eqnarray*}
(\gamma^2+1-\lambda)(1-2\lambda)^2 > (\gamma^2+1)(1-\lambda).
\end{eqnarray*}
\item \textbf{Case D: $\lambda \leq \frac{1}{2}$.}
Here $t^*_- \in [0,1]$ always, so the distribution is never unimodal for 
$\lambda \leq 1/2$ regardless of $\gamma$; bimodal critical points always exist.
\end{itemize} 
\end{itemize}
The conditions stated in Cases \textbf{A} and \textbf{D} are straightforward for the bimodality. Cases \textbf{B} and \textbf{C} state that if $t^*$ falls outside the admissible region, the distribution is unimodal, where $t^*_+=\frac{M + \sqrt{M\left(M + \lambda\gamma^2\right)}}{2(\lambda-1)M}$ and $t^*_-=\frac{-M - \sqrt{M\left(M + \lambda\gamma^2\right)}}{2(\lambda-1)M}$. In those two cases, examination of bimodality requires some extra computations.

\subsection{Probabilities}
Following \cite{tsagris2025a}, instead of the cumulative probability function we derive the probability included within a given interval $(a,b)$, and provide an alternative formula. 
\begin{equation*}
P(a \leq \theta \leq b) = \int_{a}^{b} f_{GCPC}(t)dt = 
\int_{-\pi}^{b} f_{GCPC}(t)dt - \int_{-\pi}^{a} f_{GCPC}(t)dt = F_{GCPC}(b) - F_{GCPC}(a).
\end{equation*}
Applying Theorem \ref{relation} the formula above becomes:
\begin{eqnarray*}
P(a \leq \theta \leq b) &=&
\int_{-\pi}^{\psi(b)} f_{WC}(\psi)\,d\psi - \int_{-\pi}^{\psi(a)} f_{WC}(\psi)\,d\psi = F_{WC}(\psi(b)) - F_{WC}(\psi(a)) \\
&=& \left[\frac{1}{2} + \frac{1}{\pi}\arctan\left(\frac{1+\delta}{1-\delta}
\tan\frac{\psi(b)}{2}\right)\right] - \left[\frac{1}{2} + 
\frac{1}{\pi}\arctan\!\left(\frac{1+\delta}{1-\delta}
\tan\frac{\psi(a)}{2}\right)\right] \\
&=& \frac{1}{\pi}\left[\arctan\!\left(\frac{1+\delta}
{1-\delta}\tan\frac{\psi(b)}{2}\right) - \arctan\left(\frac{1+\delta}
{1-\delta}\tan\frac{\psi(a)}{2}\right)\right]
\end{eqnarray*}
where $\psi(t) = \arctan2\left(\sin(t-\omega),\, 
\sqrt{\lambda}\cos(t-\omega)\right)$.

\subsection{Mean resultant length}
The mean resultant length is defined as $\rho=E\left[\cos{\left(\theta-\omega\right)}\right]$. Based on this, one may compute the circular variance as $1-\rho$, and the circular standard deviation as $\left(-2\log{\rho}\right)^{1/2}$. 

\begin{theorem}
The mean resultant length of the GCPC distribution is given by
\begin{equation*}
\rho = \frac{2(1+\delta)}{\pi(1-\delta)\sqrt{\lambda}} \; \Pi\left( \left(\frac{1+\delta}{1-\delta}\right)^2 \frac{1}{\lambda} - 1, 1 - \frac{1}{\lambda^2} \right),
\end{equation*}
where $\Pi\left(n,k\right)$ is the complete elliptic integral of the third kind \citep{ward1960calculation}.
\end{theorem}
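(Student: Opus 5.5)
By Theorem \ref{relation} we can reduce the computation to an expectation under the wrapped Cauchy law. Set $\omega=0$ and write $\phi=\theta-\omega$. Let $\psi=\arctan2(\sin\phi,\sqrt{\lambda}\cos\phi)$. From the proof of Theorem \ref{relation} we have $\cos\phi=\sqrt{b}\cos\psi$ and $\sin\phi=\sqrt{\lambda b}\sin\psi$. Squaring and adding gives $1=b(\cos^2\psi+\lambda\sin^2\psi)$. Hence
\begin{equation*}
\rho=E\left[\cos\phi\right]=E_{WC(0,\delta)}\left[\frac{\cos\psi}{\sqrt{\cos^2\psi+\lambda\sin^2\psi}}\right]
=\int_{-\pi}^{\pi}\frac{\cos\psi}{\sqrt{1+(\lambda-1)\sin^2\psi}}\,\frac{1-\delta^2}{2\pi(1+\delta^2-2\delta\cos\psi)}\,d\psi .
\end{equation*}
This avoids working with the awkward GCPC density directly. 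The integrand is even in $\psi$, so we may integrate over $[0,\pi]$ and double.

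Next we remove the Poisson-kernel factor. The natural substitution is the one behind the wrapped Cauchy distribution function used in the Probabilities subsection, namely $\tan(\psi/2)=\frac{1+\delta}{1-\delta}\tan(u/2)$. Under this map $u$ is uniform on $(-\pi,\pi)$, and this is where the factor $\frac{1+\delta}{1-\delta}$ in the statement should come from. Alternatively, and more directly, we substitute $t=\tan(\psi/2)$ on $[0,\pi)$. This gives
\begin{equation*}
\frac{1-\delta^2}{1+\delta^2-2\delta\cos\psi}\,d\psi=\frac{2(1-\delta^2)\,dt}{(1-\delta)^2+(1+\delta)^2t^2}.
\end{equation*}
In these variables $\cos\psi=(1-t^2)/(1+t^2)$, and $\cos^2\psi+\lambda\sin^2\psi=\left[(1-t^2)^2+4\lambda t^2\right]/(1+t^2)^2$. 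The integrand therefore becomes a rational function of $t$ times $\left[(1-t^2)^2+4\lambda t^2\right]^{-1/2}$, which is an elliptic integral.

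The remaining step is to bring this integral to Legendre's normal form $\int_0^{\pi/2}\frac{d\vartheta}{(1-n\sin^2\vartheta)\sqrt{1-k^2\sin^2\vartheta}}$. I would first try the cleaner route in the $\psi$ variable. Write $\cos\psi$ times the Poisson kernel as a constant plus a multiple of $1/(1+\delta^2-2\delta\cos\psi)$. The constant part integrates against $(1+(\lambda-1)\sin^2\psi)^{-1/2}$ to a complete elliptic integral of the first kind. For the other part, fold $[0,\pi]$ onto $[0,\pi/2]$ via $\psi\mapsto\pi-\psi$, which pairs $1+\delta^2\mp 2\delta\cos\psi$. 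Then set $\sin\vartheta$ in terms of $\cos\psi$ so that the pole becomes $1-n\sin^2\vartheta$ with $n=\left(\frac{1+\delta}{1-\delta}\right)^2\frac{1}{\lambda}-1$, and the radical becomes $\sqrt{1-k\sin^2\vartheta}$.

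The main obstacle is this reduction: choosing the substitution that simultaneously produces the stated characteristic $n$ and modulus parameter $1-1/\lambda^2$, and making the first-kind contributions cancel or be absorbed so that only the single $\Pi$ term with prefactor $\frac{2(1+\delta)}{\pi(1-\delta)\sqrt{\lambda}}$ survives. I would fix the substitution by matching the quadratic $(1-t^2)^2+4\lambda t^2$ to its factorised form $(1+\alpha t^2)(1+\beta t^2)$ with $\alpha\beta=1$ and $\alpha+\beta=4\lambda-2$, followed by the Landen-type rescaling $t\mapsto\tan\vartheta/\sqrt{\alpha}$. I would also check the final expression numerically against direct quadrature, since the convention for the second argument of $\Pi$ (modulus versus parameter) has to be confirmed at that stage.
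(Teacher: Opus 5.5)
Your reduction to a wrapped Cauchy expectation is correct and is exactly the paper's first step: $\cos\phi=\cos\psi/\sqrt{1+(\lambda-1)\sin^2\psi}$ together with $f_{GCPC}\,d\theta=f_{WC}\,d\psi$ gives the same integral that the paper writes down. The gap is the final reduction, and a cleverer substitution will not close it, because the identity you are aiming for is false. The first-kind contributions therefore cannot be made to cancel.

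Take $\lambda=1$, where GCPC$(\omega,\gamma,1)$ is WC$(\omega,\delta)$ and $\rho=\delta$. The second argument of $\Pi$ is then $0$, so the modulus-versus-parameter question is moot, and $\Pi(n,0)=\pi/(2\sqrt{1-n})$. With $q=(1+\delta)/(1-\delta)$ the stated right-hand side equals $q/\sqrt{2-q^2}$. This is $1$ at $\delta=0$, exceeds $1$ for every $\delta>0$, and the defining integral is singular once $q\geq\sqrt{2}$. With the other common convention, $1+n\sin^2\vartheta$ in the denominator, it is identically $1$.

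Likewise, for $\gamma=\delta=0$ the density $1/(2\pi\sqrt{\lambda}\,b)$ is invariant under $\phi\mapsto\phi+\pi$, so $\rho=0$. For $\lambda\geq 1$, however, the right-hand side is $2/(\pi\sqrt{\lambda})$ times the integral of a strictly positive function.

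The paper's own proof does not carry out this step either. It only names the substitution $u=\pi/2-\psi$. The expression it ends with (characteristic $q^2(1/\lambda-1)$, second argument $\sqrt{1-1/\lambda^2}$) differs from the statement and also fails at $\lambda=1$, where it gives $q$ instead of $\delta$.

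Carried through, your own plan yields the correct formula. Write
\[
\frac{\cos\psi}{1+\delta^2-2\delta\cos\psi}=-\frac{1}{2\delta}+\frac{1+\delta^2}{2\delta}\,\left(1+\delta^2-2\delta\cos\psi\right)^{-1}.
\]
Fold $[0,\pi]$ onto $[0,\pi/2]$ via $\psi\mapsto\pi-\psi$, which turns the kernel into $2(1+\delta^2)/[(1+\delta^2)^2-4\delta^2\cos^2\psi]$. Then set $u=\pi/2-\psi$, so that $1+(\lambda-1)\sin^2\psi=\lambda[1-(1-1/\lambda)\sin^2u]$ and $(1+\delta^2)^2-4\delta^2\cos^2\psi=(1+\delta^2)^2(1-N\sin^2u)$ with $N=4\delta^2/(1+\delta^2)^2$. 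For $\delta>0$ this gives
\[
\rho=\frac{1-\delta^2}{\pi\delta\sqrt{\lambda}}\left[\Pi\left(N\,\big|\,1-\frac{1}{\lambda}\right)-K\left(1-\frac{1}{\lambda}\right)\right],
\]
where $\Pi(n\,|\,m)=\int_0^{\pi/2}(1-n\sin^2u)^{-1}(1-m\sin^2u)^{-1/2}\,du$ and $K(m)=\Pi(0\,|\,m)$. At $\lambda=1$ it returns $\rho=\delta$.

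Compared with the statement:
\begin{itemize}
\item the elliptic parameter is $1-1/\lambda$, not $1-1/\lambda^2$;
\item the characteristic is $4\delta^2/(1+\delta^2)^2$;
\item the prefactor is different;
\item the $K$ term generated by your constant part does not cancel.
\end{itemize}
Your intended comparison with quadrature is the right instinct, but the $\lambda=1$ sanity check should come first. It shows that the target must be corrected rather than proved.
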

\begin{proof}
\begin{equation*}
E[\cos\theta] = \frac{1}{2\pi\sqrt{\lambda}}\int_{-\pi}^{\pi} 
\frac{\cos\theta}{\sqrt{b}D}\,d\theta, \qquad 
b = \cos^2\theta + \frac{\sin^2\theta}{\lambda}, \quad 
D = \sqrt{\gamma^2+1}\,\sqrt{b} - \gamma\cos\theta. 
\end{equation*}
Applying the change of variables $\psi = \arctan2\!\left(\sin\theta,\, \sqrt{\lambda}\cos\theta\right)$ 
from Theorem \ref{relation}, so that $f_{\mathrm{GCPC}}(\theta)\,d\theta = f_{WC}(\psi)\,d\psi$, we express 
$\cos\theta$ in terms of $\psi$:
\begin{equation*}
\cos\psi = \frac{\cos\theta}{\sqrt{b}}, \qquad 
\sin\psi = \frac{\sin\theta}{\sqrt{\lambda b}}, \qquad 
b = \frac{1}{1+(\lambda-1)\sin^2\psi}, \qquad 
\cos\theta = \frac{\cos\psi}{\sqrt{1+(\lambda-1)\sin^2\psi}}.
\end{equation*}
Substituting and using $f_{WC}(\psi) = \dfrac{1-\delta^2}{2\pi(1+\delta^2-2\delta\cos\psi)}$ we obtain:
\begin{equation*}
E\left(\cos\theta\right) = \frac{1-\delta^2}{2\pi}\int_{-\pi}^{\pi} 
\frac{\cos\psi}{\left(1+\delta^2 - 2\delta\cos\psi\right)
\sqrt{1+(\lambda-1)\sin^2\psi}}\,d\psi. 
\end{equation*}
Since the integrand is even in $\psi$, doubling the integral over $[0,\pi]$ and reducing via the substitution $u = \pi/2 - \psi$ yields:
\begin{equation*}
E\left(\cos\theta\right) = \frac{2(1+\delta)}{\pi(1-\delta)\sqrt{\lambda}}\;
\Pi\left(\left(\frac{1+\delta}{1-\delta}\right)^{\!2}
  \left(\frac{1}{\lambda}-1\right), \sqrt{1-\frac{1}{\lambda^2}}
\right),
\end{equation*}
confirming the non-analytical nature of $\rho = E[\cos\theta]$ for $\lambda \neq 1$.
\end{proof}

It remains the case that there is no analytical formula for $\rho$, unless $\lambda=1$, and this applies to higher trigonometric moments. Figure \ref{rhoH}(a) visualizes the values of $\rho$ for a grid of values of $\gamma$ and $\lambda$. We observe that $\rho$ increases with increasing $\gamma$ and decreases with increasing $\lambda$ values. 

\subsection{Entropy}
\begin{theorem} \label{ent}
The entropy of the GCPC distribution is given by 
\begin{eqnarray*}
H_{GCPC}(\omega,\gamma,\lambda) = \log{\left\lbrace\frac{8\pi\sqrt{\lambda}\,(1-\delta^2)}{\bigl[\sqrt{\lambda}(1-\delta^2)+1+\delta^2\bigr]^2}\right\rbrace}.
\end{eqnarray*}
\end{theorem}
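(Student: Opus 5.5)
The plan is to compute $H_{GCPC}=-E\left[\log f(\theta)\right]$ through the transformation of Theorem \ref{relation}, so that the entropy reduces to an expectation under the WC distribution. Entropy does not depend on location, so I set $\omega=0$. Write $\theta=\phi$ and $\psi=\arctan2(\sin\phi,\sqrt{\lambda}\cos\phi)$. This map is a smooth bijection of the circle, and by Theorem \ref{relation} $\psi\sim WC(0,\delta)$.

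By the change-of-variables rule for differential entropy,
\begin{equation*}
H_{GCPC}=H_{WC}+E_{WC}\left[\log\left|\frac{d\phi}{d\psi}\right|\right].
\end{equation*}
From the proof of Theorem \ref{relation}, $|d\phi/d\psi|=\sqrt{\lambda}\,b$. From the mean resultant length proof, $b=1/(1+(\lambda-1)\sin^2\psi)$. Hence
\begin{equation*}
\log\left|\frac{d\phi}{d\psi}\right|=\tfrac12\log\lambda-\log\left(\cos^2\psi+\lambda\sin^2\psi\right).
\end{equation*}
The WC entropy is standard: $H_{WC}=\log\{2\pi(1-\delta^2)\}$. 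It can be checked with the same harmonic-function argument used in the next step.

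The main step is to evaluate $E_{WC}[\log(\cos^2\psi+\lambda\sin^2\psi)]$. I will write
\begin{equation*}
\cos^2\psi+\lambda\sin^2\psi=\frac{1+\lambda}{2}+\frac{1-\lambda}{2}\cos 2\psi=\left(\frac{1+\sqrt{\lambda}}{2}\right)^2\left|1+re^{2i\psi}\right|^2,
\qquad r=\frac{1-\sqrt{\lambda}}{1+\sqrt{\lambda}}.
\end{equation*}
Here $|r|<1$ for every $\lambda>0$, and the identity follows by expanding $|1+re^{2i\psi}|^2=1+r^2+2r\cos 2\psi$.

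Next I use the fact that the WC$(0,\delta)$ density is the Poisson kernel of the unit disk at the point $\delta$. Consequently $E_{WC}[h(e^{i\psi})]=h(\delta)$ for any $h$ that is harmonic on the disk and continuous up to the boundary. Apply this to $h(z)=\log|1+rz^2|^2=2\,\mathrm{Re}\log(1+rz^2)$, which is harmonic because $|rz^2|<1$. This gives $E_{WC}\log|1+re^{2i\psi}|^2=2\log(1+r\delta^2)$. Therefore
\begin{equation*}
E_{WC}\left[\log(\cos^2\psi+\lambda\sin^2\psi)\right]=2\log\frac{(1+\sqrt{\lambda})+(1-\sqrt{\lambda})\delta^2}{2}=2\log\frac{\sqrt{\lambda}(1-\delta^2)+1+\delta^2}{2}.
\end{equation*}

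Combining the pieces,
\begin{equation*}
H_{GCPC}=\log\{2\pi(1-\delta^2)\}+\tfrac12\log\lambda-2\log\frac{\sqrt{\lambda}(1-\delta^2)+1+\delta^2}{2}.
\end{equation*}
This simplifies to the expression in Theorem \ref{ent}.

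I expect the main obstacle to be justifying the Poisson-kernel mean value step, namely identifying $f_{WC}$ with the Poisson kernel and checking harmonicity. If the complex-analytic route is unwelcome, the fallback is to expand $\log|1+re^{2i\psi}|^2=2\sum_{n\ge1}(-1)^{n+1}r^n\cos(2n\psi)/n$. One then uses the WC trigonometric moments $E\cos(2n\psi)=\delta^{2n}$ and sums the resulting series termwise to $2\log(1+r\delta^2)$.
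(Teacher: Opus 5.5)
Your proof is correct and follows the paper's route: the same reduction via Theorem \ref{relation} to $H_{GCPC}=H_{WC}+\frac{1}{2}\log\lambda-E_{WC}\left[\log\left(1+(\lambda-1)\sin^2\psi\right)\right]$, and the same factorization through $r=(1-\sqrt{\lambda})/(1+\sqrt{\lambda})$ (the paper's $r_1=-r$). The only difference is that you evaluate the key WC expectation with the Poisson-kernel mean-value property; this is equivalent to the paper's termwise Fourier summation using $E_{WC}[\cos(2n\psi)]=\delta^{2n}$, which is also your fallback.
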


\begin{proof}
The entropy is defined as $H = -\int_{-\pi}^{\pi} f(\theta)\log f(\theta)\,d\theta$. Taking the logarithm of (\ref{gcpc3}), we have:
$\log f_{GCPC} = -\log(2\pi) - \frac{1}{2}\log\lambda - \frac{1}{2}\log b - \log D$. Using the change of variables from Theorem \ref{relation}, we have $f_{GCPC}(\theta)\,d\theta = f_{WC}(\psi)\,d\psi$ and
\begin{eqnarray*}
\log D = \frac{1}{2}\log b + \log(c - \gamma\cos\psi) \ \ \text{and} \ \ b\left[\phi(\psi)\right] = \frac{1}{1+(\lambda-1)\sin^2\psi},
\end{eqnarray*}
where $c - \gamma\cos\psi = (1+\delta^2-2\delta\cos\psi)/(1-\delta^2)$. 

By changing variables and substituting the result into the entropy we obtain:
\begin{eqnarray*}
H_{GCPC} = \log(2\pi) + \frac{1}{2}\log\lambda + E_{WC}\left(\log b\right)+ \int_{-\pi}^{\pi} f_{WC}(\psi)\log(c-\gamma\cos\psi)\,d\psi.
\end{eqnarray*}
The last integral equals
$H_{WC} - \log{\left(2\pi\right)}$, thus
\begin{equation}\label{eq:decomp}
  H_{GCPC} = H_{WC} + \frac{1}{2}\log\lambda - E_{WC}\left[\log(1+(\lambda-1)\sin^2\psi)\right].
\end{equation}
We know that the entropy of the WC distribution is $H_{WC} = \log{\left[2\pi\left(1-\delta^2\right)\right]}$. Let us now write $1+(\lambda-1)\sin^2\psi = \frac{1+\lambda}{2}(1-\alpha\cos 2\psi)$ with $\alpha=(\lambda-1)/(\lambda+1)$.
Then, $1-\alpha\cos 2\psi = (1+r_1^2-2r_1\cos 2\psi)/(1+r_1^2)$ where
\begin{eqnarray*}
r_1 = \frac{1-\sqrt{1-\alpha^2}}{\alpha} = \frac{\sqrt{\lambda}-1}{\sqrt{\lambda}+1}.
\end{eqnarray*}

Using the Fourier identity $\log(1+\delta^2-2\delta\cos\psi)=-2\sum_{n=1}^{\infty}\frac{\delta^n}{n}\cos(n\psi)$
we get $E_{\mathrm{WC}}[\cos(n\psi)]=\delta^n$. Applying the same Fourier identity at frequency $2n$, with $E_{WC}\left[\cos(2n\psi)\right]=\delta^{2n}$:
\begin{eqnarray*}
E_{WC}\left[\log(1-\alpha\cos 2\psi)\right] = -\log(1+r_1^2) + 2\log(1-r_1\delta^2).
\end{eqnarray*}
Using $\log\frac{1+\lambda}{2}-\log(1+r_1^2) = 2\log\frac{\sqrt{\lambda}+1}{2}$ and
$1-r_1\delta^2 = \frac{\sqrt{\lambda}(1-\delta^2)+1+\delta^2}{\sqrt{\lambda}+1}$ we obtain
\begin{eqnarray*}
E_{WC}\left[\log(1+(\lambda-1)\sin^2\psi)\right] = 2\log\frac{\sqrt{\lambda}(1-\delta^2)+1+\delta^2}{2}.
\end{eqnarray*}

Substituting the above result into~\eqref{eq:decomp} we obtain
\begin{eqnarray*}
H_{GCPC}(\omega,\gamma,\lambda)= \log{\left\lbrace\frac{8\pi\sqrt{\lambda}\,(1-\delta^2)}{\bigl[\sqrt{\lambda}(1-\delta^2)+1+\delta^2\bigr]^2}\right\rbrace}.
\end{eqnarray*}
\end{proof}

Note that if $\lambda=1$: $H_{GCPC} = \log(2\pi(1-\delta^2)) = H_{WC}$, and if $\delta=0$: $H_{GCPC} = \log(8\pi\sqrt{\lambda}/(\sqrt{\lambda}+1)^2) \leq \log(2\pi)$. Figure \ref{rhoH}(b) visualizes the values of $H$ for a grid of values of $\gamma$ and $\lambda$. We observe that, in contrast to $\rho$, $H$ increases with increasing $\lambda$ and decreases with increasing $\gamma$ values. 
 
\begin{figure}[!ht]
\centering
\begin{tabular}{cc}
\includegraphics[scale = 0.45]{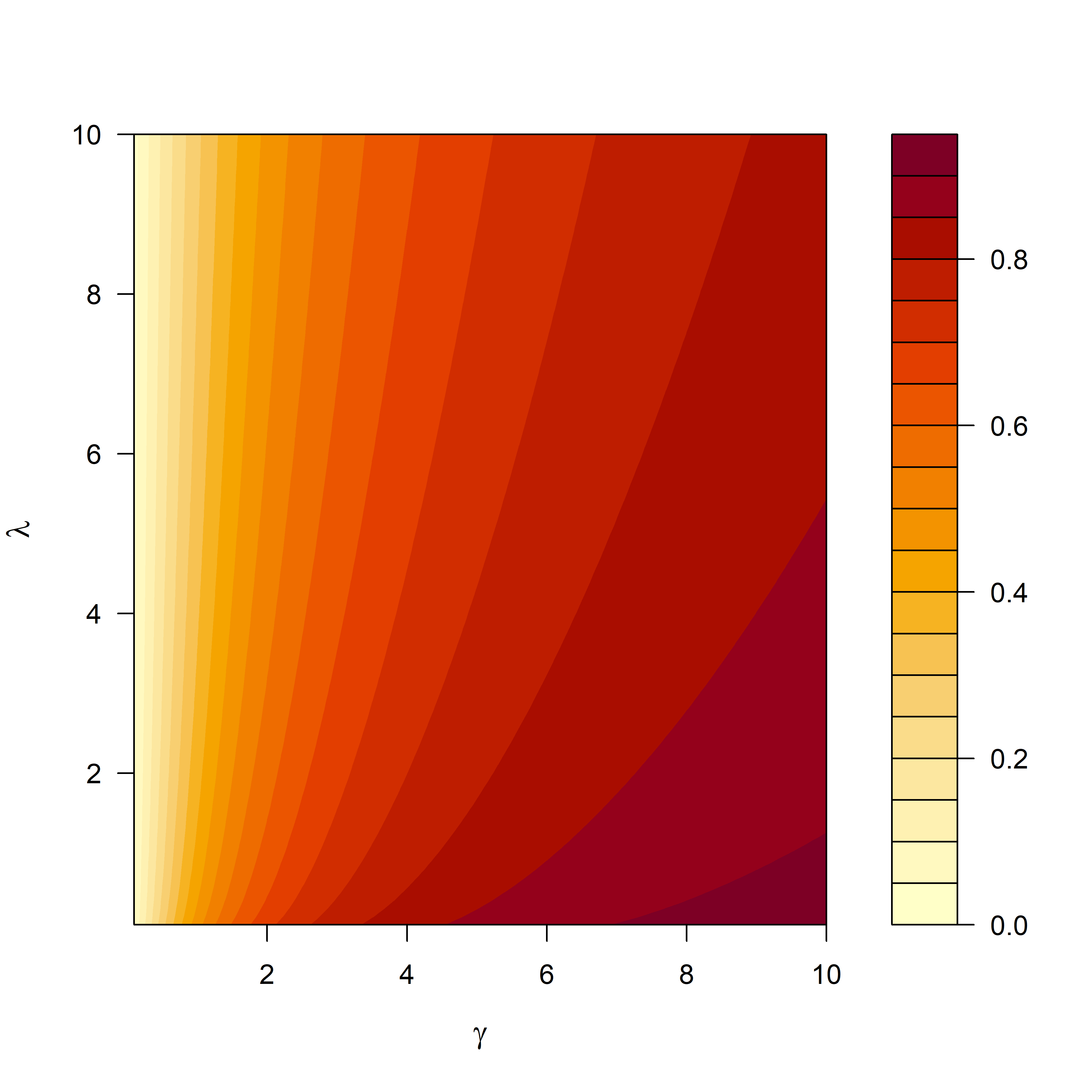} &
\includegraphics[scale = 0.45]{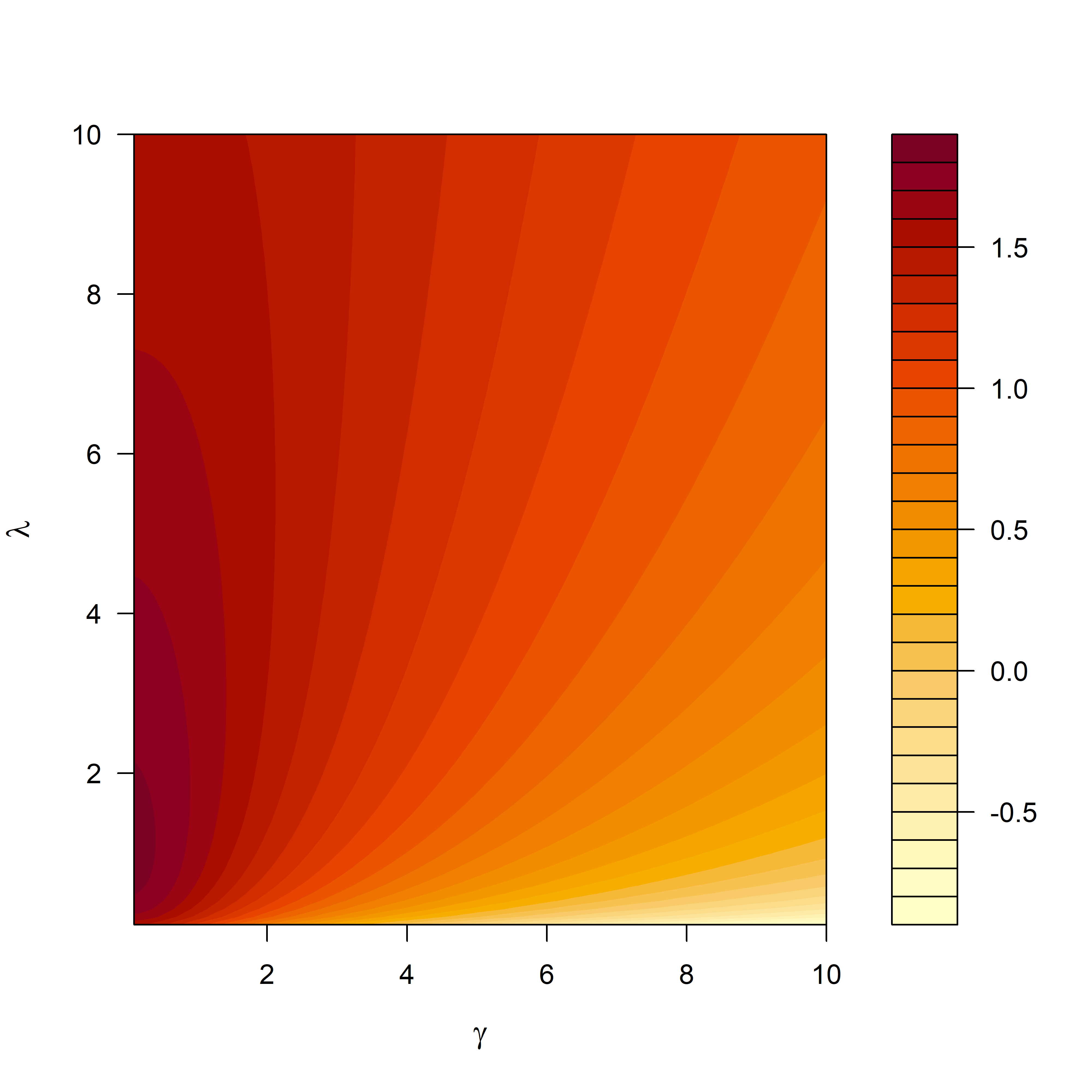} \\
(a) $\rho$ & (b) $H$
\end{tabular}
\caption{Values of the mean resultant length, $\rho$ and entropy, $H$, as a function of $\gamma$ and $\lambda$.}
\label{rhoH}
\end{figure}

\subsection{Kullback-Leibler divergence between GCPC and CIPC}
Using Theorem \ref{relation} we derived a formula for the Kullback-Leibler divergence between the GCPC$\left(\omega,\gamma,\lambda\right)$ and the CIPC$\left(\omega,\gamma\right)$ which admits no closed-form expression 
\begin{eqnarray*}
KL(GCPC \| CIPC) &=& -\frac{1}{2}\log\lambda 
+ 2\log\!\left(\frac{\sqrt{\lambda}(1-\delta^2)+1+\delta^2}{2}\right) 
- \log(1-\delta^2) \\
&& + E_{WC}\!\left[\log\!\left(\sqrt{\gamma^2+1} - \frac{\gamma\cos\psi}{\sqrt{1+(\lambda-1)\sin^2\psi}}\right)\right],
\end{eqnarray*}
where the expectation is taken with respect to $\psi \sim WC(0, \delta)$.

\subsection{Maximum likelihood estimation}
\cite{tsagris2025a} performed maximum likelihood estimation (MLE) using the Euclidean representation of the density of the GCPC (\ref{gcpc}). We observed that this approach is sub-optimal and can yield a local instead of the global maximum, when the distribution is bimodal. We emphasize though that if the distribution is unimodal their approach is valid. Despite their efforts to ensure the maximum, via a clever implementation, we will show in the real data analysis, that the use of the log-likelihood parametrized using \ref{gcpc} does not lead to the global maximum, since the log-likelihood can have four local maxima\footnote{\cite{tsagris2025a} mentioned that the solution to Eq. (\ref{der0}) has four roots.}, when the distribution is bimodal. Our solution is to plot the data first and observe whether the distribution is unimodal or bimodal and then decide on which density parametrization to employ.

\subsection{Hypothesis testing and confidence intervals}
\subsubsection{Hypothesis test for the location parameter of one sample}
In order to test whether the location parameter equals some pre-specified value we will employ the log-likelihood ratio test. 
Under the $H_0$, $\omega=\omega_0$, and so we need to maximize the restricted log-likelihood, $\ell_0$, with respect to the other two parameters, $\gamma$ and $\lambda$, and estimate their values, $\tilde{\gamma}$ and $\tilde{\lambda}$. Under the $H_1$, $\omega \neq \omega_0$, and we maximize the unrestricted log-likelihood, $\ell_1$, to obtain $\hat{\omega}$, $\hat{\gamma}$ and $\hat{\lambda}$. Under regularity conditions, $\Lambda=2\left[\ell_1\left(\hat{\omega}, \hat{\omega}, \hat{\lambda}\right)-\ell_0\left(\omega_0, \tilde{\gamma},\tilde{\lambda}\right)\right] \dotsim \chi^2_1$.

\subsubsection{Confidence interval for the true location parameter}
Using the log-likelihood ratio test, we can construct asymptotic confidence intervals for the true location parameter $\omega$ of a sample by searching for the pair of values for which it holds that \citep{cox1979}: $\ell\left(\omega, \hat{\gamma},\hat{\lambda}\right) > \ell\left(\hat{\omega},\hat{\gamma},\hat{\lambda}\right) - \frac{\chi^2_1}{2}$.

\subsubsection{Equality of two location parameters}
To test the equality of two location parameters, without assuming equality of the parameters $\gamma$ and $\lambda$, we will perform a log-likelihood ratio test\footnote{All the relevant functions are available in the \textit{R} package \textsf{Directional} \citep{directional2026}.} following \citep{tsagris2025b}. Assume we have circular observations from two independent samples, $(\theta_{11},\ldots,\theta_{1{n_1}})$ and $(\theta_{21},\ldots,\theta_{2{n_2}})$, where $n_1$ and $n_2$ denote the sample sizes of the two samples. 

Under $H_0$, $\omega_1=\omega_2=\tilde{\omega}$, and by using Eq. (\ref{gcpc2}), the log-likelihood is written as
\begin{eqnarray*}
\ell_0\left(\tilde{\omega}, \tilde{\lambda}_1,\tilde{\lambda}_2, \tilde{\gamma}_1,\tilde{\gamma}_2\right) &=& -(n_1+n_2)\log(2\pi) -\frac{n_1}{2}\log(\tilde{\lambda}_1) -\frac{n_2}{2}\log(\tilde{\lambda}_2) \\
&&  - \sum_{i=1}^{n_1}\log{\left(\tilde{b}_{1i}\sqrt{\tilde{\gamma}_1^2+1}-\tilde{a}_{1i}\sqrt{\tilde{b}_{1i}}\right)}-\sum_{i=1}^{n_2}\log{\left(\tilde{b}_{2i}\sqrt{\tilde{\gamma}_2^2+1}-\tilde{a}_{2i}\sqrt{\tilde{b}_{2i}}\right)},
\end{eqnarray*}
where $\tilde{a}_{ji}=\tilde{\gamma}_j\cos{\left(\theta_{ji}-\tilde{\omega}\right)}$ and $\tilde{b}_{ji}=\cos^2{\left(\theta_{ji}-\tilde{\omega}\right)}+\frac{\sin^2\left(\theta_{ji}-\tilde{\omega}\right)}{\tilde{\lambda}_j}$, for $j=1,2$. 

Under $H_1$, $\omega_1 \neq \omega_2$, and the log-likelihood is written as
\begin{eqnarray*}
\ell_1\left(\hat{\omega}, \hat{\lambda}_1,\hat{\lambda}_2, \hat{\gamma}_1,\hat{\gamma}_2\right) &=& -(n_1+n_2)\log(2\pi) -\frac{n_1}{2}\log(\hat{\lambda}_1) -\frac{n_2}{2}\log(\hat{\lambda}_2) \\
&&  - \sum_{i=1}^{n_1}\log{\left(\hat{b}_{1i}\sqrt{\gamma_1^2+1}-\hat{a}_{1i}\sqrt{\hat{b}_{1i}}\right)}-\sum_{i=1}^{n_2}\log{\left(\hat{b}_{2i}\sqrt{\hat{\gamma}_2^2+1}-\hat{a}_{2i}\sqrt{\hat{b}_{2i}}\right)},
\end{eqnarray*}
where $\hat{a}_{ji}=\hat{\gamma}_j\cos{\left(\theta_{ji}-\hat{\omega}_j\right)}$ and $\hat{b}_{ji}=\cos^2{\left(\theta_{ji}-\hat{\omega}_j\right)}+\frac{\sin^2\left(\theta_{ji}-\hat{\omega}_j\right)}{\hat{\lambda}_j}$, for $j=1,2$. 

Under regularity conditions, $\Lambda=2\left[\ell_1\left(\hat{\omega}, \hat{\lambda}_1,\hat{\lambda}_2, \hat{\gamma}_1,\hat{\gamma}_2\right)-\ell_0\left(\tilde{\omega}, \tilde{\lambda}_1,\tilde{\lambda}_2, \tilde{\gamma}_1,\tilde{\gamma}_2\right)\right] \dotsim \chi^2_1$.

\subsection{Regression modelling revisited}
\cite{tsagris2025a} defined the GCPC regression by considering the bivariate representation (Euclidean coordinates) of the circular data as opposed to their univariate nature \citep{kato2008}. This is akin to the approach employed in the SPML model, as detailed by \cite{presnell1998} and allows for varying concentration parameter ($\lambda$). The log-likelihood of the GCPC regression model is written as 
\begin{eqnarray} \label{gcpcreg}
\ell_{GCPC} &=& -\sum_{i=1}^n\log{\left(\bm{y}_i^\top\bm{\Sigma}^{-1}_i\left(\bm{B}\right)\bm{y}_i\sqrt{\bm{\mu}_i^\top\bm{\mu}_i+1}-\bm{y}_i^\top\bm{\mu}_i\sqrt{{\bf y}_i^\top\bm{\Sigma}^{-1}_i\left(\bm{B}\right)\bm{y}_i}\right)} \nonumber \\
& & -\frac{n}{2}\log(\lambda)-n\log{(2\pi)},
\end{eqnarray}
where 
\begin{eqnarray*}
\bm{y}_i^\top\bm{\Sigma}^{-1}_i\left(\bm{B}\right)\bm{y}_i &=& \bm{y}_i^\top\left[ \xi_{1i}\left(\bm{B}\right)\xi_{1i}\left(\bm{B}\right)^\top/\lambda+\xi_{2i}\left(\bm{B}\right)\xi_{2i}\left(\bm{B}\right)^\top\right]\bm{y}_i \\
&=& y_{1i}^2\left(\xi_{2i}^2/\lambda+\xi_{1i}^2\right)+y_{2i}^2\left(\xi_{1i}^2/\lambda+\xi_{2i}^2\right)+2y_{1i}y_{2i}\xi_{1i}\xi_{2i}\left(1-1/\lambda\right)\\
&=& \left(y_{1i}\xi_{2i}-y_{2i}\xi_{1i}\right)^2/\lambda +
\left(y_{1i}\xi_{1i}+y_{2i}\xi_{2i}\right)^2.
\end{eqnarray*}
and
\begin{eqnarray} \label{link}
\xi_{ji}=\frac{\bm{\beta}_j^\top x_i}{\|\bm{\beta}_j^\top x_i\|}=\frac{\mu_{ji}}{\|\bm{\mu}_i\|}=\frac{\mu_{ji}}{\gamma_i} \ \ \text{for} \ \ j=1,2,
\end{eqnarray}
 with $\bm{x}_i$ denoting the $i$-th observation of the covariate vector. 

To maximize the log-likelihood of the GCPC regression model (\ref{gcpcreg}), \cite{tsagris2025a} suggested the use of multiple starting values, however we propose a computationally more efficient approach. We begin with initial regression coefficients produced by the SPML regression model of \cite{presnell1998} and then estimate the $\lambda$ parameter using Brent's algorithm\footnote{This is available in \textit{R} via the built-in \texttt{optimize()} function.} \citep{brent2013}. These estimates are subsequently used as starting values in \textit{R}'s built-in \texttt{optim()} function. 

\cite{tsagris2025a} did not consider the case of circular-circular regression, where a covariate $X$ is circular. But, this case is easily accommodated in the above case scenario by transforming the circular covariate to Euclidean by using $\bm{z}_i=\left(\cos{x_i}, \sin{x_i}\right)^\top$. In case of multiple circular covariates, all of them are transformed into their Euclidean coordinates and the same link function (\ref{link}) is used. Simplicial predictors (compositional data) are straightforward to add by using two options. The first is to apply the additive log-ratio transformation \citep{ait2003}, so that the regression coefficients sum to zero and have a meaningful (derivative based) interpretation. In case of zero values present the logarithmic transformation breaks down, so the alternative approach is to transform them via the $\alpha$--transformation \citep{tsagris2011}. The second approach bypasses the problem of zero values but lacks interpretation of the estimated regression coefficients and requires one to select the value of the $\alpha$ parameter. The choice of $\alpha$ may be overcome by choosing a small value of $\alpha$ (for instance, $\alpha=0.01$) to resemble the isometric log-ratio transformation \citep{ilr2003}.

A further extension consists of linking the $\lambda$ parameter to the covariates, but that would increase the computational cost. A complementary approach to introduce non-linearity consists of incorporating splines for the covariate effects.

\section{Simulation studies}
We performed simulation studies to examine the performance of the test for one and two location parameters and to assess the effect of misspecifying $\lambda$ on the size of the log-likelihood ratio test. We did not perform simulation studies for the asymptotic confidence interval for the location parameter since this is derived from the log-likelihood ratio test. 

\subsection{One location parameter}
We selected six sample sizes, and generated values from the $GCPC\left(2,3,2\right)$. For each sample size we computed the type I error (assuming $\alpha=0.05$) based on 1,000 replicates. Table \ref{gcpc.test1} presents the results, where we can observe that the test slightly overestimates the nominal type I error, though the deviation is not substantial.

\begin{table}[ht]
\centering
\caption{Estimated type I error of the log-likelihood ratio test for one location parameter assuming the GCPC distribution.}
\label{gcpc.test1}
\begin{tabular}{r|rrrrrr}
\toprule
Sample size  & 30 & 50 & 70 & 100 & 150 & 200 \\
Type I error & 0.065 & 0.059 & 0.066 & 0.062 & 0.067 & 0.063 \\ 
\midrule
\bottomrule
\end{tabular}
\end{table}

\subsection{Two location parameters}
 We compared the test when both samples come from the GCPC distribution and when assuming that they come from the CIPC distribution ($\lambda=1$). Six pairs of unequal sample sizes were used, where the true location parameters were equal to $\omega=2$ for both populations, but the $\gamma$ and $\lambda$ parameters differed between the two populations. The parameters for the smaller sample were equal to $\gamma=4$ and $\lambda=1$, while for the larger sample they were equal to $\gamma=2$ and $\lambda=3$. Circular data were generated from two GCPC distributions with the specified parameters and the two log-likelihood ratio tests were performed. This process was repeated 1,000 times and the proportion of rejection of the $H_0$ (at the $\alpha=0.05$ level) served as an estimate of the type I error. Table \ref{gcpc.test2} presents the results. The GCPC based log-likelihood ratio always attained the correct size, whereas the CIPC based log-likelihood ratio test overestimated the size of the test. 

\begin{table}[ht]
\centering
\caption{Estimated type I error of the log-likelihood ratio test for the equality of two location parameters, assuming the GCPC distribution and assuming the CIPC distribution.}
\label{gcpc.test2}
\begin{tabular}{rrr}
\toprule
Sample size & GCPC & CIPC \\ 
\midrule
(30, 50)  & 0.054 & 0.098 \\ 
(30, 70)  & 0.056 & 0.094 \\ 
(30, 100) & 0.060 & 0.102 \\ 
(50, 70)  & 0.048 & 0.100 \\ 
(50, 100) & 0.059 & 0.101 \\ 
(70, 100) & 0.061 & 0.092 \\ 
\bottomrule
\end{tabular}
\end{table}

\subsection{Empirical asymptotics for the regression model}
We estimated empirically the order of convergence of the regression coefficients, with a) one circular predictor, b) one continuous predictor and c) the combination of both. The grid for the sample sizes considered ranged from 100 up to 5,000 at an increasing step of 100. For every sample size $n$, data were generated with some pre-specified coefficients and the regression model was fitted. 

For the circular predictor, the model was
\begin{eqnarray*}
\left(y_1, \ y_2\right)=
\left( 1, \ \cos{u}, \ \sin{u} \right) \left(
\begin{array}{cc}
\beta_{01}  & \beta_{02} \\
\beta_{11}  & \beta_{12}  \\ 
\beta_{21}  & \beta_{22}  
\end{array}  
\right),
\end{eqnarray*}
where $\bm{u}$ was generated from a von Mises distribution with location parameter $\omega=2$ and concentration parameter $\gamma=5$. For the continuous predictor, the model was
\begin{eqnarray*}
\left(y_1, \ y_2\right)=
\left( 1, \ x \right) \left(
\begin{array}{cc}
\beta_{01}  & \beta_{02} \\
\beta_{11}  & \beta_{12}  \\ 
\end{array}  
\right),
\end{eqnarray*}
where $\bm{x}$ was generated from an exponential distribution with mean equal to 0.5. Table \ref{betas} shows the matrix with the ground truth regression coefficients $\bm{B}$ used. For all cases, the $\lambda$ values of the GCPC distribution used to generate the circular responses were equal to 0.5 and 5.     

The discrepancy between the observed and the estimated regression coefficients was measured using the squared Frobenius norm. This process was repeated 50 times and the average squared Frobenius norm ($\bar{F}^2$) was stored. Then, we estimated the coefficient of the following regression model:
\begin{eqnarray*}
\log{\bar{F}^2} \sim a + b \log{n}.
\end{eqnarray*}
The estimated $b$ served as the empirical rate of convergence of the regression coefficients. 

Table \ref{betas} contains the matrix $\bm{B}$ of the regression parameters used in the simulation study. Figure \ref{rates} visualizes the rates when $\lambda=0.5$ and $\lambda=5$. For all cases the estimated slope is very close to 1, indicating that the convergence rate of the estimated regression coefficients is $n^{-1}$.  

\begin{table}[ht]
\caption{Matrix $\bm{B}$ of the regression coefficients used in the simulations. The first two columns correspond to the coefficients of the model with the circular predictor, the second two to the model with the continuous predictor and the last two to the model with both of them. In the last case $\bm{\beta}_1$ and $\bm{\beta}_2$ refer to the circular predictor and $\bm{\beta}_3$ to the continuous predictor.}
\label{betas}
\centering
\begin{tabular}{l|cc|cc|cc} \toprule
Predictor     & \multicolumn{2}{c}{Circular} & \multicolumn{2}{c}{Continuous} & 
\multicolumn{2}{c}{Circular and continuous}  \\ \midrule
              &  $\cos(y)$ & $\sin(y)$ & $\cos(y)$ & $\sin(y)$ & $\cos(y)$ & $\sin(y)$  \\ \midrule
$\bm{\beta}_0$ & 1.874  & 2.550  & 1.641  & 1.949  & 1.859  & 3.508  \\
$\bm{\beta}_1$ & -1.473 & -2.023 & -0.101 & -0.119 & -1.500 & -2.914 \\ 
$\bm{\beta}_2$ & -1.157 & -1.554 &        &        & -0.960 & -1.855 \\
$\bm{\beta}_3$ &        &        &        &        & -0.246 & -0.181 \\
\bottomrule
\end{tabular}
\end{table}

\begin{figure}[!ht]
\centering
\begin{tabular}{cc}
\multicolumn{2}{c}{Circular covariate} \\
\includegraphics[scale = 0.3]{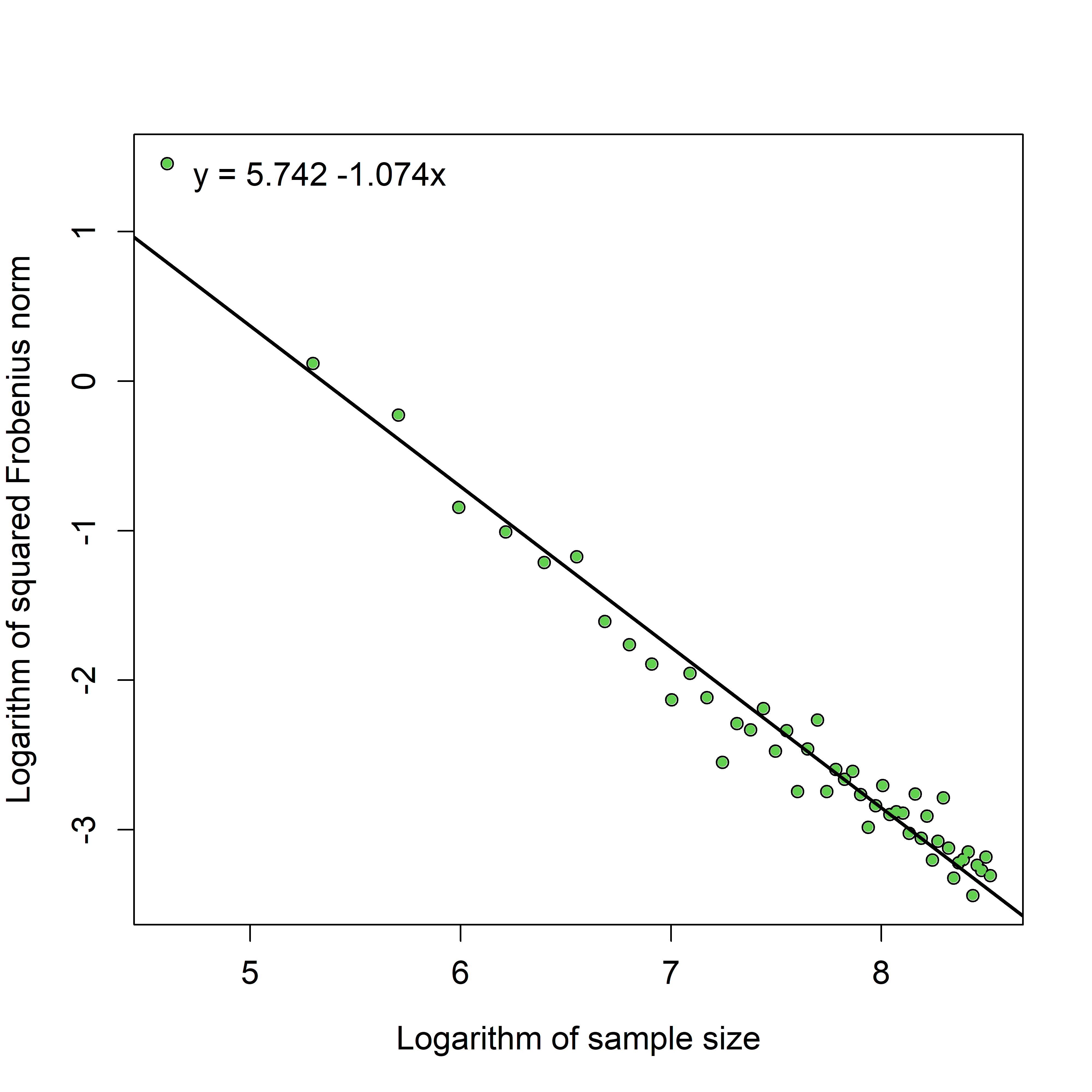} &
\includegraphics[scale = 0.3]{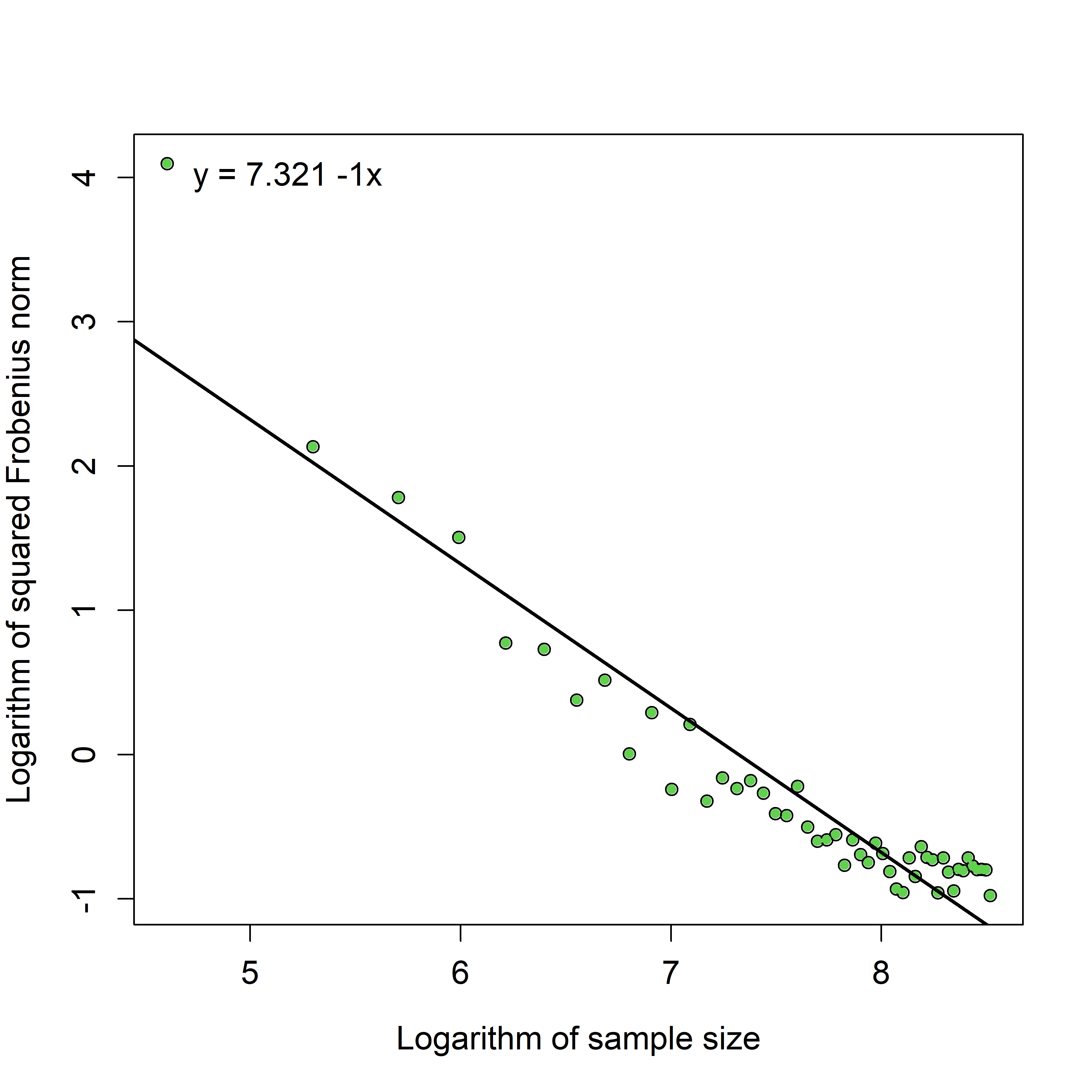} \\
(a) $\lambda=0.5$ & (b) $\lambda=5$ \\
\multicolumn{2}{c}{Continuous covariate} \\
\includegraphics[scale = 0.3]{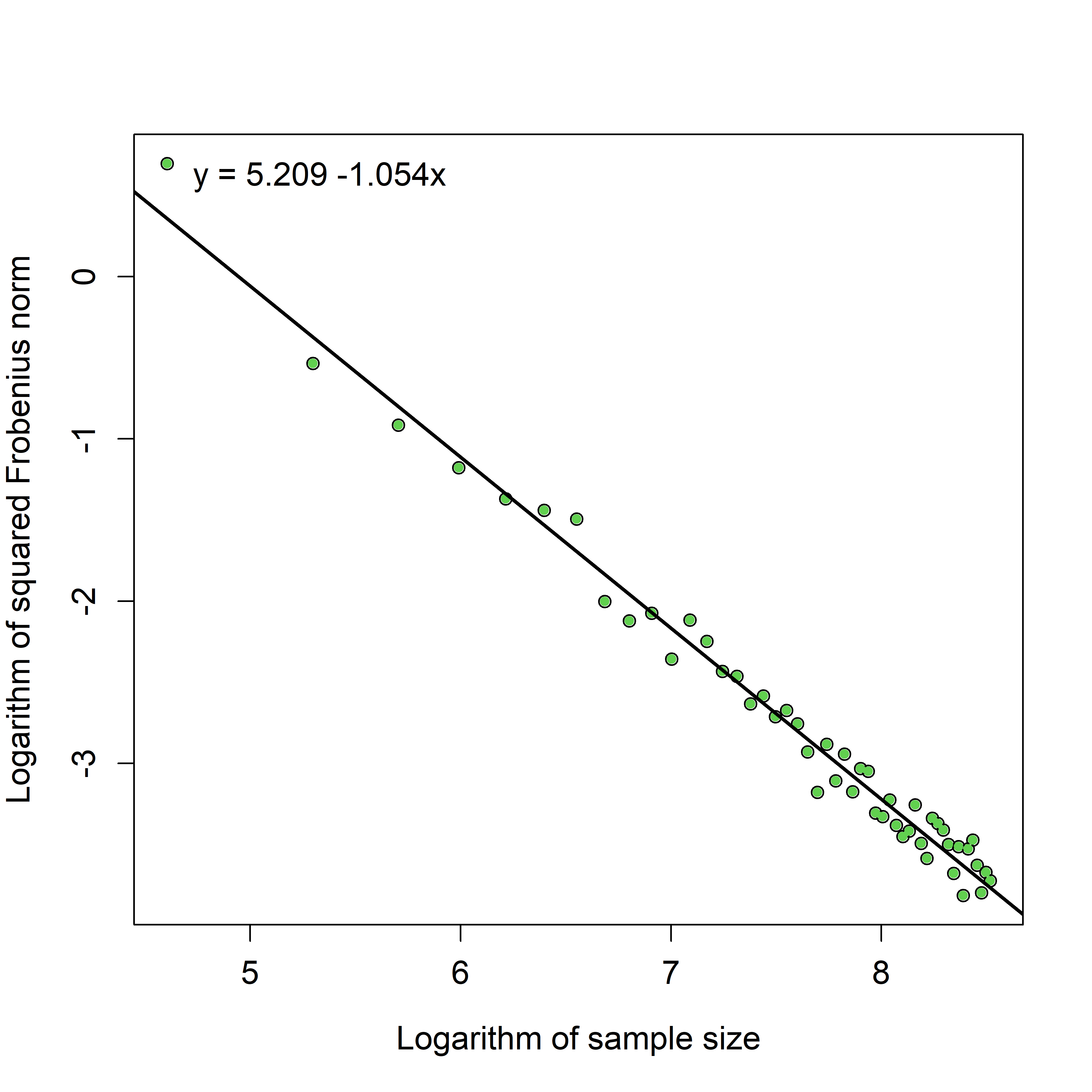} &
\includegraphics[scale = 0.3]{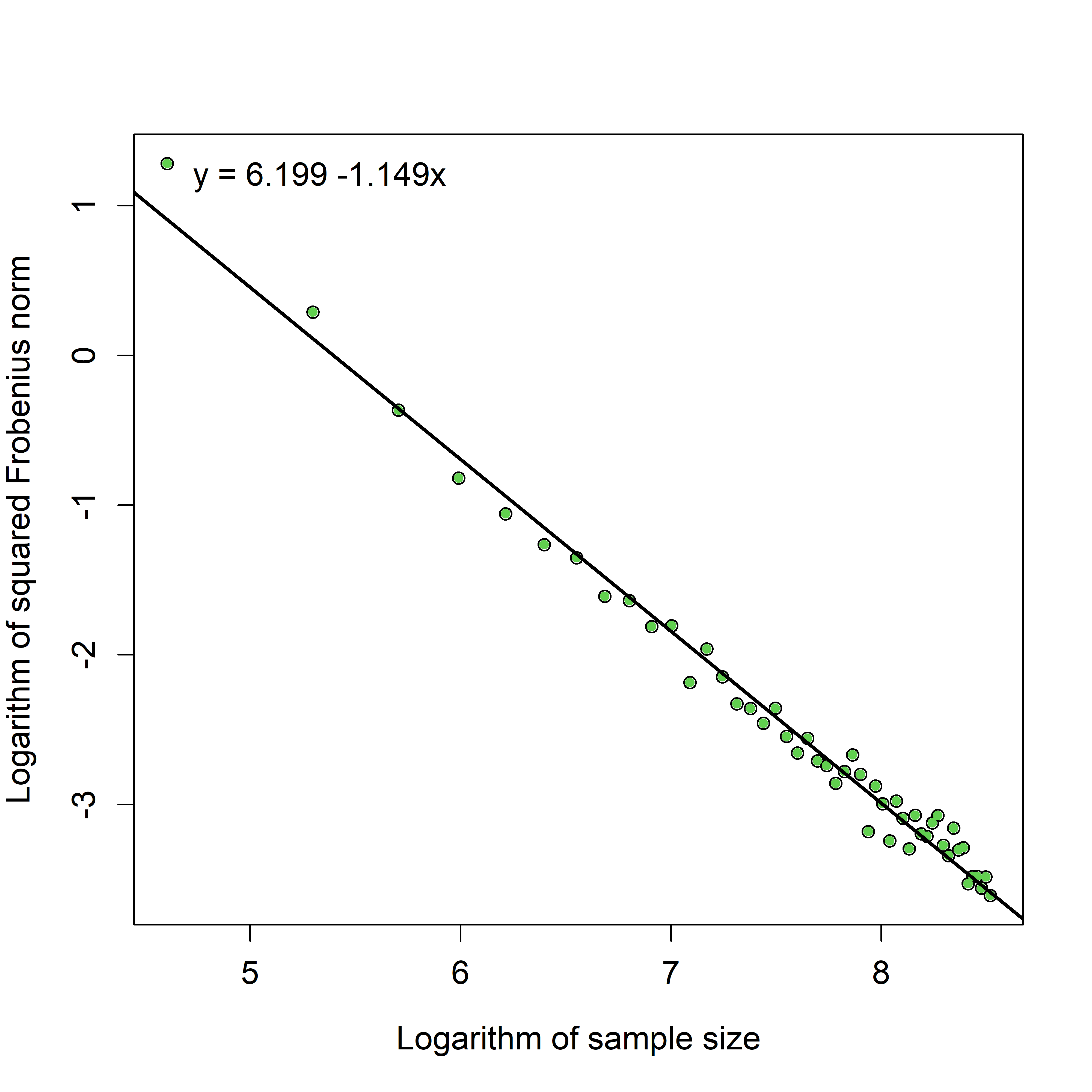} \\
(c) $\lambda=0.5$ & (d) $\lambda=5$ \\
\multicolumn{2}{c}{Circular and continuous covariate} \\
\includegraphics[scale = 0.3]{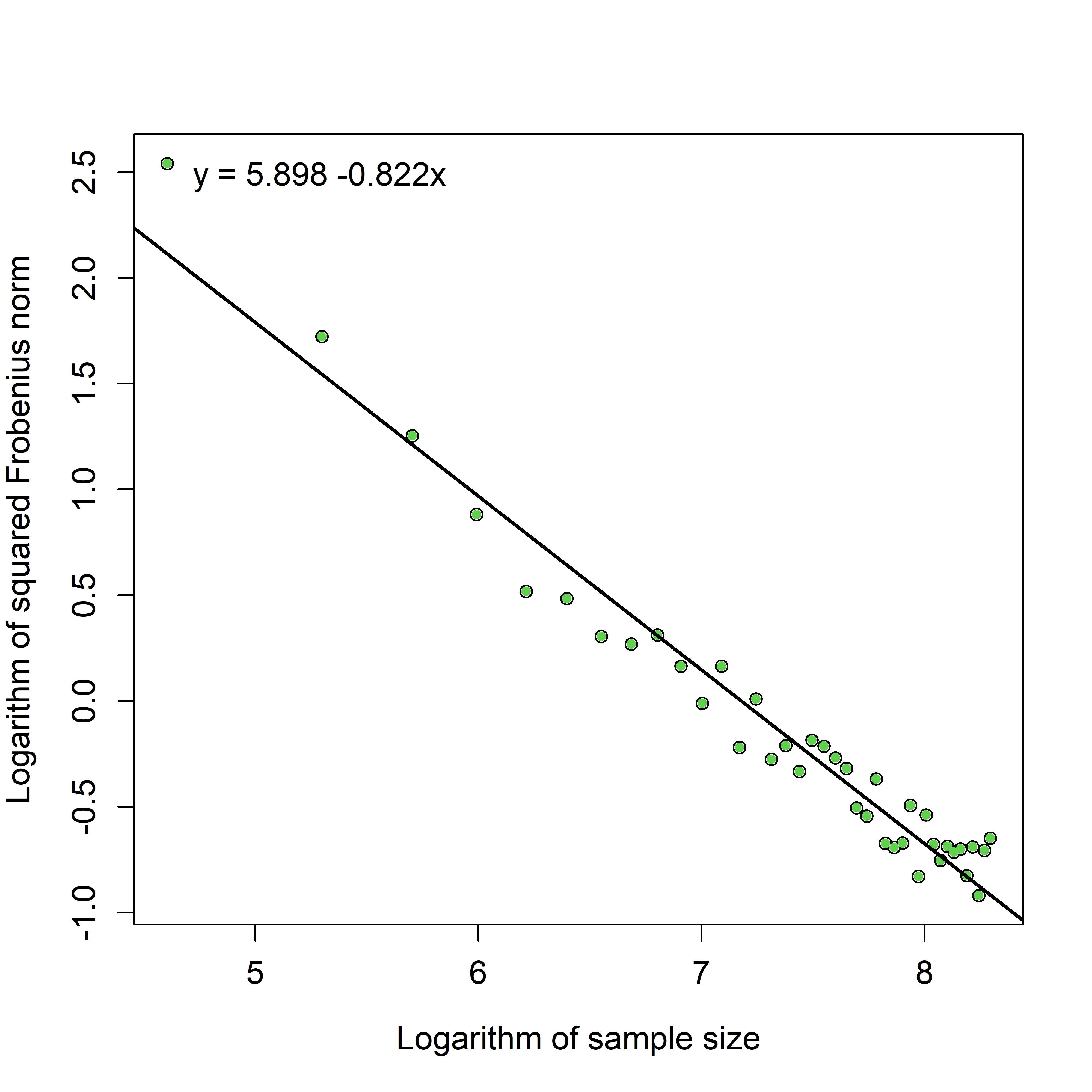} &
\includegraphics[scale = 0.3]{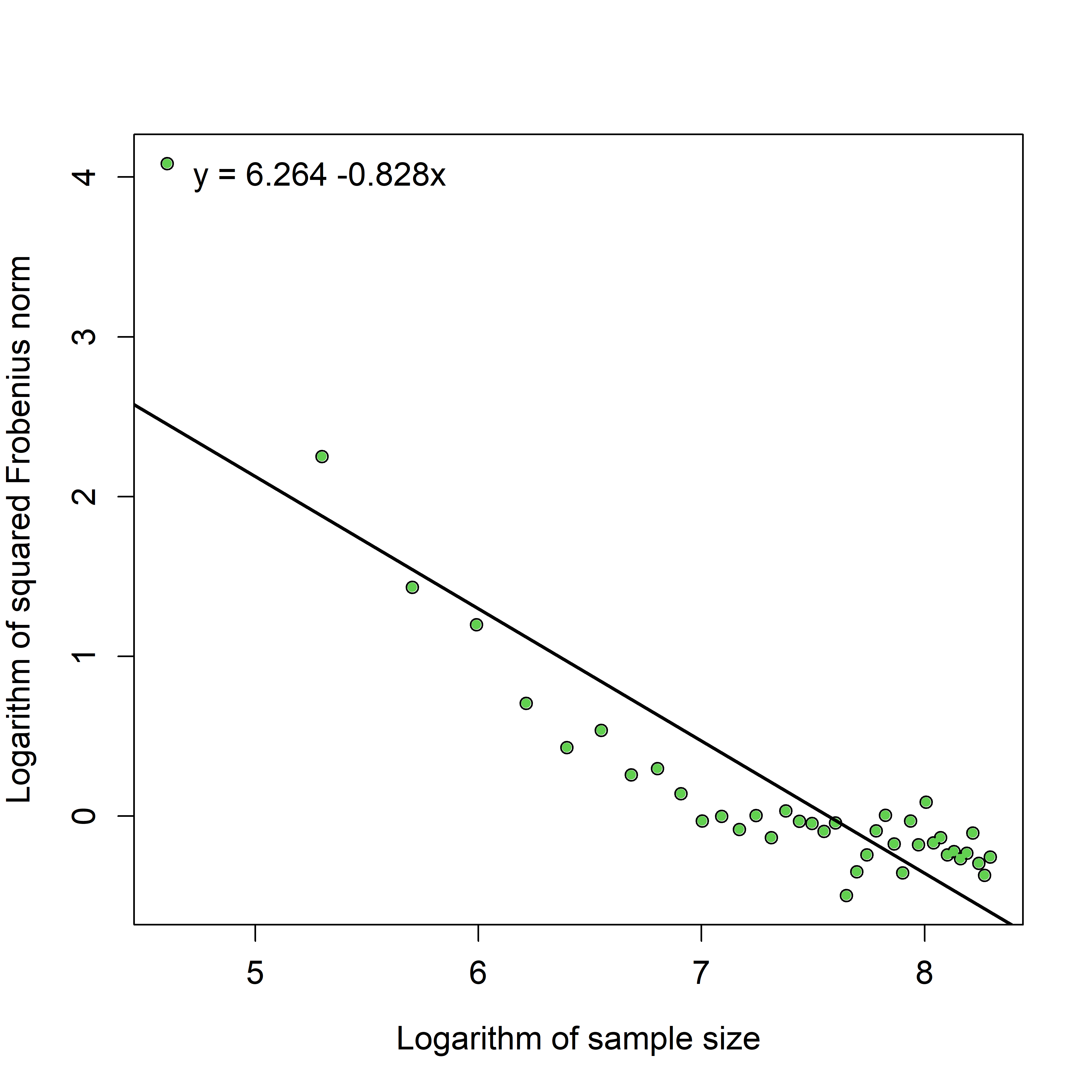} \\
(e) $\lambda=0.5$ & (f) $\lambda=5$
\end{tabular}
\caption{Average squared Frobenius norm $\bar{F}^2$ versus sample size, both in log-scale for the circular predictor, continuous predictor and the combination of both.}
\label{rates}
\end{figure}

\section{Real data analysis}
We used the same dataset used in \cite{tsagris2025a}, the \textit{speed.wind2} data that is available in the \textit{R} package \textsf{NPCirc} \citep{npcirc2014}. This dataset consists of 199 hourly observations of wind direction and wind speed in winter season (from November to February) from 2003 until 2012 in the Atlantic coast of Galicia (Spain). 

\subsection{MLE}
Table \ref{mle} presents the MLE of the CIPC and the GCPC distributions fitted to the speed direction data. We highlight that the MLE using Eq. (\ref{gcpc}), \cite{tsagris2025a} was trapped in one of the three local maxima, while the MLE using (\ref{gcpc2}) obtained the global maximum. 
Notice that the estimated $\lambda$ falls within Case \textbf{D}, $\lambda \leq \frac{1}{2}$, and hence the distribution is bimodal, which is also evident from Figures \ref{fig:mle}(b) and \ref{fig:mle}(c).
Table \ref{mle} presents the estimated parameters of the CIPC and GCPC models. Application of the log-likelihood ratio test to discriminate between the two models \citep{tsagris2025a} clearly favors the GCPC model. 

Figure \ref{fig:mle}(a) visualizes the data, it is the circular plot with the location parameter shown with arrows. The density plot on Figure \ref{fig:mle}(b) shows that the GCPC density has a similar pattern to the one produced by the kernel density estimate, in contrast to the CIPC distribution that does not adequately capture the distributional features of the data. This is more evident in the circular density plot (Figure \ref{fig:mle}(c)). The CIPC distribution has a circular shape, whereas the GCPC distribution has an elliptical shape, capturing the shape of the data more accurately. Figure \ref{fig:mle}(d) visualizes the 95\% confidence interval for the location parameter. Figure \ref{loglik} visualizes the profile log-likelihood of the location parameter $\omega$. The log-likelihood contains four modes, three of them correspond to local maxima and one is the global maximum.

\begin{table}[ht]
\caption{Estimated parameters of the CIPC and GCPC models applied to the wind direction data. The GCPC* indicates the MLE using the polar coordinates parametrized GCPC density (\ref{gcpc2}).}
\label{mle}
\centering
\begin{tabular}{l|cccc} \toprule
Model & $\hat{\omega}$ & $\hat{\gamma}$ & $\hat{\lambda}$ & Log-likelihood \\ \midrule
CIPC  & 0.603  & 0.203  &       & -363.930  \\ 
GCPC  & 5.587  & 0.050  & 4.21  & -337.739  \\
GCPC* & 0.873  & 0.238  & 0.155 & \textbf{-336.682} \\
\bottomrule
\end{tabular}
\end{table}

\begin{figure}[!ht]
\centering
\begin{tabular}{cc}
\includegraphics[scale=0.4]{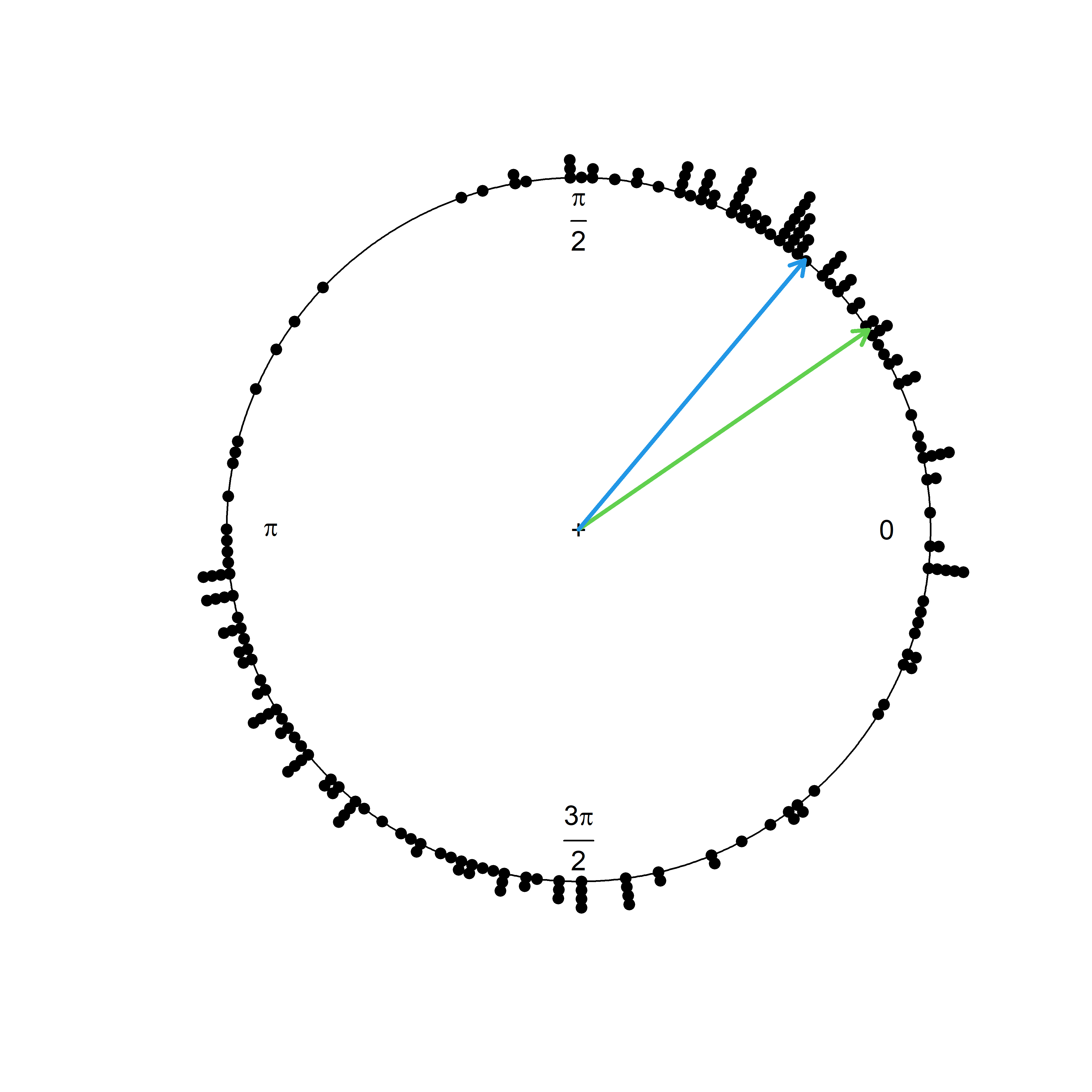} &
\includegraphics[scale=0.4]{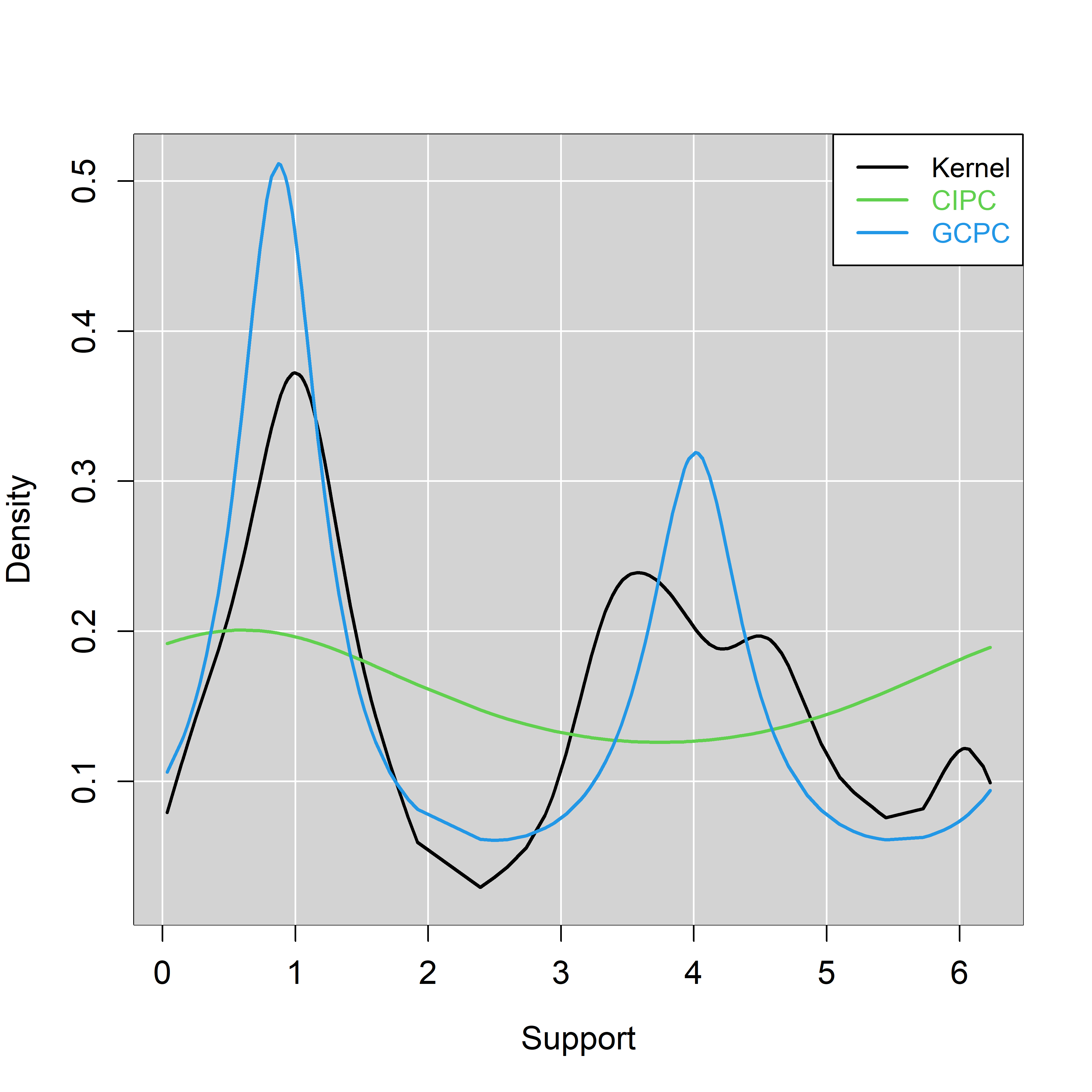} \\
(a) Circular plot & (b) Density plot \\
\includegraphics[scale=0.6]{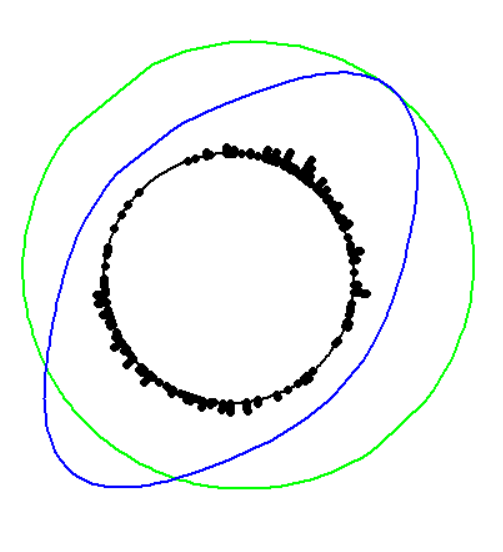} & 
\includegraphics[scale=0.4]{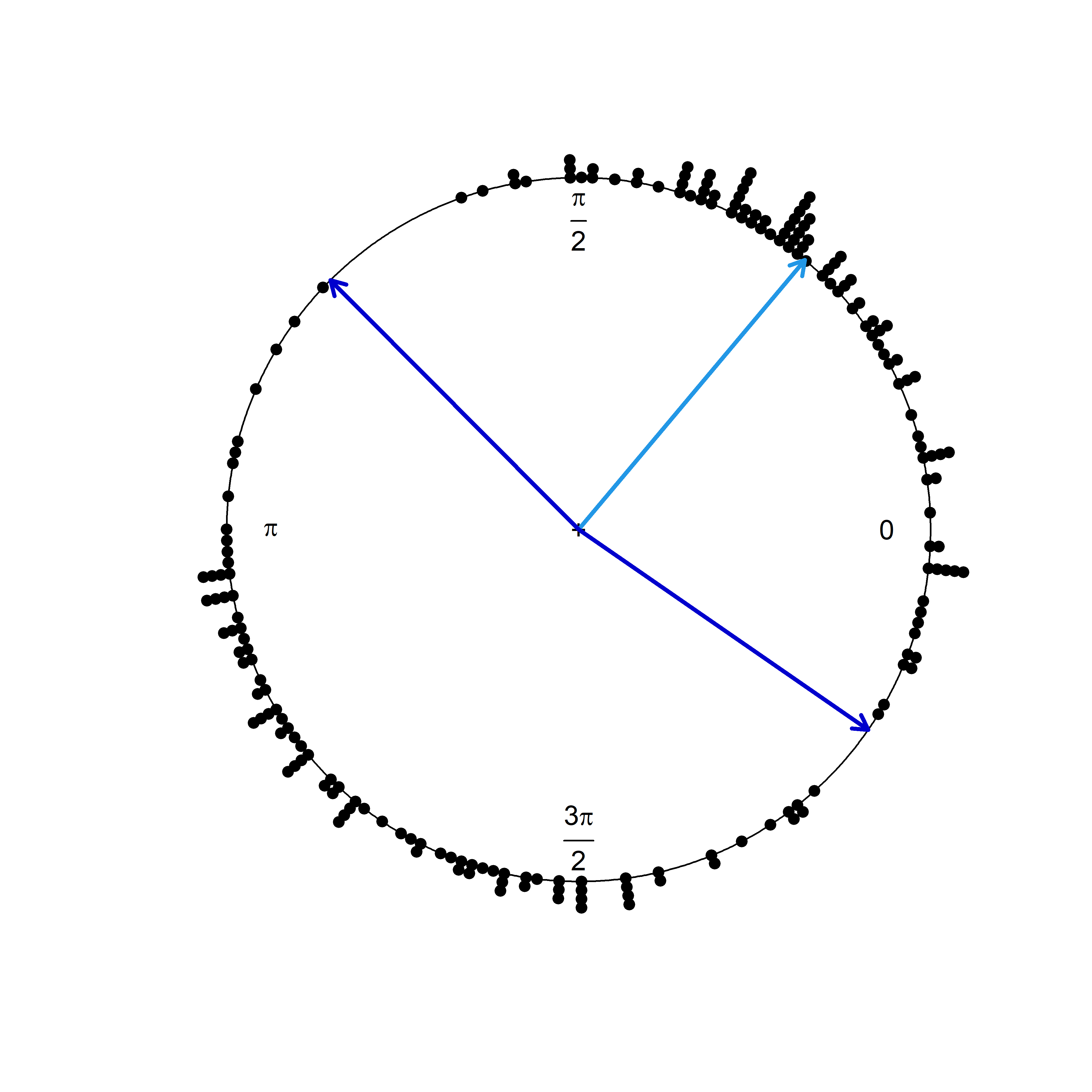} \\
(c) Circular density plot & (d) 95\% confidence interval for the true location parameter
\end{tabular}
\caption{(a) Circular plot of the speed direction data.  (b) Density plot of the kernel density estimate, and the fitted CIPC and GCPC distributions. (c) Circular density plot of the two fitted distributions. The green line is the estimated location parameter based on the CIPC, whereas the blue line is the estimated location parameter based on the GCPC model.}
\label{fig:mle}
\end{figure}

\begin{figure}[!ht]
\centering
\includegraphics[scale=0.45]{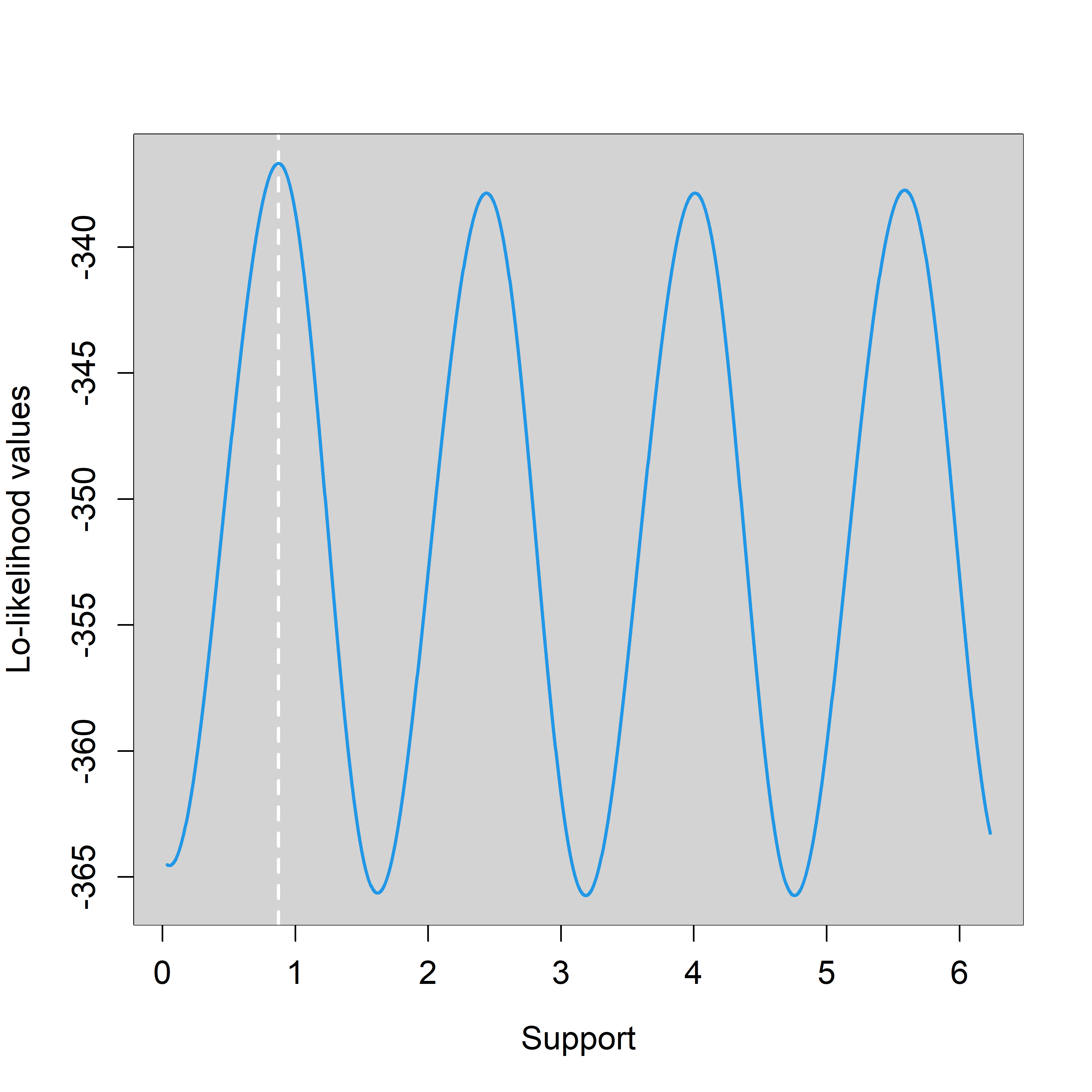} 
\caption{Profile log-likelihood of the location parameter $\omega$ for the GCPC distribution using the speed direction data. The white vertical line corresponds to the estimated location parameter that maximizes the log-likelihood.The periodicity is equal to $\pi/2$.}
\label{loglik}
\end{figure}

\subsection{Regression}
We identified a mistake in the regression modelling of \cite{tsagris2025a}, where the continuous predictor is the speed of the wind. Table \ref{creg_res} shows the regression coefficients computed in \cite{tsagris2025a} and the correct estimates. Note that this time the sign of the slope coefficients matches that of the CIPC and of the spherically projected multivariate linear regression models. The $\lambda$ coefficient still remains statistically significantly far from 1, and the log-likelihood ratio test still favors the GCPC regression over the CIPC regression model.

\begin{table}[ht]
\caption{Estimated regression parameters (their standard errors appear within parentheses) for the GCPC regression model fitted to the wind data. On the right is the corrected version of the GCPC regression model.}
\label{creg_res}
\centering
\begin{tabular}{l|cc|cc} \hline
Model            & \multicolumn{2}{c}{GCPC \citep{tsagris2025a}} & \multicolumn{2}{c}{GCPC}   \\ \hline \hline
&  $\cos(y)$ & $\sin(y)$ & $\cos(y)$ & $\sin(y)$  \\ \hline
$\hat{\bm{\alpha}}$ & -0.042 (0.0013) & -0.045 (0.0023) & 0.164 (0.0012) & 0.195 (0.0012) \\
$\hat{\bm{\beta}}$  &  0.009 (0.0005) & 0.010 (0.0009)   & -0.010 (0.0022) & -0.012 (0.0021)\\ \hline \hline
Loglik        & -331.846      &  & -335.219 \\
$\hat{\rho}$  & 0.203 (0.039) &  & 0.226 (0.055) \\ \hline 
\end{tabular}
\end{table}

\section{Conclusions}
We derived the relationship between the GCPC and the CIPC (reparameterized WC) distribution, provided non-analytical formulas for the mean resultant length and the Kullback-Leibler divergence, an alternative analytical formula to compute probabilities, and an analytical formula for the entropy of the GCPC. We also showed the conditions under which the GCPC distribution is unimodal. We further proposed two log-likelihood ratio tests for hypothesis testing with one or two location parameters. For the two location parameters testing case, the test is size correct when the data are wrongfully assumed to follow the CIPC distribution. We observed that in the case of a bimodal distribution there is danger the MLE to be trapped in a local maximum and showed how to escape this trap. Finally, we corrected a mistake in the regression setting and proposed a computationally more efficient alternative than the one proposed by \cite{tsagris2025a}. 

\bibliographystyle{apalike}
\bibliography{bib}

@article{presnell1998,
  title={Projected multivariate linear models for directional data},
  author={Presnell, Brett and Morrison, Scott P and Littell, Ramon C},
  journal={Journal of the American Statistical Association},
  volume={93},
  number={443},
  pages={1068--1077},
  year={1998},
  publisher={Taylor \& Francis Group}
}

@article{tsagris2025a,
  title={{Circular and spherical projected Cauchy distributions: A novel framework for directional data modelling}},
  author={Tsagris, M and Alzeley, O},
  journal={Australian \& New Zealand Journal of Statistics},
  volume={67},
  number={1},
  pages={77--103},
  year={2025},
  publisher={Wiley Online Library}
}

@article{tsagris2025b,
  title={{Directional data analysis: spherical Cauchy or Poisson kernel-based distribution?}},
  author={Tsagris, Michail and Papastamoulis, Panagiotis and Kato, Shogo},
  journal={Statistics and Computing},
  volume={35},
  number={2},
  pages={51},
  year={2025},
  publisher={Springer}
}

@article{gill2010,
  title={Circular data in political science and how to handle it},
  author={Gill, Jeff and Hangartner, Dominik},
  journal={Political Analysis},
  volume={18},
  number={3},
  pages={316--336},
  year={2010},
  publisher={Cambridge University Press}
}

@article{shirota2017,
  title={{Space and circular time log Gaussian Cox processes with application to crime event data}},
  author={Shirota, Shinichiro and Gelfand, Alan E},
  journal={The Annals of Applied Statistics},
  volume={11},
  number={2},
  pages={481--503},
  year={2017},
  publisher={JSTOR}
}

@article{landler2018,
  title={Circular data in biology: advice for effectively implementing statistical procedures},
  author={Landler, Lukas and Ruxton, Graeme D and Malkemper, E Pascal},
  journal={Behavioral Ecology and Sociobiology},
  volume={72},
  number={8},
  pages={128},
  year={2018},
  publisher={Springer}
}

@article{horne2007,
  title={{Analyzing animal movements using Brownian bridges}},
  author={Horne, Jon S and Garton, Edward O and Krone, Stephen M and Lewis, Jesse S},
  journal={Ecology},
  volume={88},
  number={9},
  pages={2354--2363},
  year={2007},
  publisher={Wiley Online Library}
}

@article{soler2019,
  title={Histogram of oriented gradients: a technique for the study of molecular cloud formation},
  author={Soler, Juan D and Beuther, H and Rugel, M and Wang, Y and Clark, PC and Glover, Simon CO and Goldsmith, Paul F and Heyer, Mark and Anderson, LD and Goodman, A and others},
  journal={Astronomy \& Astrophysics},
  volume={622},
  pages={A166},
  year={2019},
  publisher={EDP Sciences}
}

@article{paine2018,
  title={{An elliptically symmetric angular Gaussian distribution}},
  author={Paine, Phillip J and Preston, Simon P and Tsagris, Michail and Wood, Andrew TA},
  journal={Statistics and Computing},
  volume={28},
  number={3},
  pages={689--697},
  year={2018},
  publisher={Springer}
}

@book{mardia2000,
  title={Directional statistics},
  author={Mardia, Kanti V. and Jupp, Peter E.},
  year={2000},
  publisher={Chichester: John Wiley \& Sons}
}

@article{ward1960calculation,
  title={The calculation of the complete elliptic integral of the third kind},
  author={Ward, Morgan},
  journal={The American Mathematical Monthly},
  volume={67},
  number={3},
  pages={205--213},
  year={1960},
  publisher={Taylor \& Francis}
}

@article{kato2008,
  title={A circular--circular regression model},
  author={Kato, Shogo and Shimizu, Kunio and Shieh, Grace S},
  journal={Statistica Sinica},
  volume={18},
  number={2},
  pages={633--645},
  year={2008},
  publisher={JSTOR}
}

@inproceedings{tsagris2011,
  title={A data-based power transformation for compositional data},
  author={Tsagris, M.T. and Preston, S. and Wood, A.T.A.},
  booktitle={Proceedings of the 4th Compositional Data Analysis Workshop, Girona, Spain},
  year={2011},
}

@book{ait2003,
  title={The statistical analysis of compositional data},
  author={Aitchison, J.},
  year={2003},
  publisher={New Jersey: Reprinted by The Blackburn Press}
}

@article{ilr2003,
  title={{Isometric logratio transformations for compositional data analysis}},
  author={Egozcue, J.J. and Pawlowsky-Glahn, V. and Mateu-Figueras, G. and Barcel{\'o}-Vidal, C.},
  journal={Mathematical Geology},
  volume={35},
  number={3},
  pages={279--300},
  year={2003},
  publisher={Springer}
}

@book{brent2013,
  title={Algorithms for minimization without derivatives},
  author={Brent, Richard P},
  year={2013},
  publisher={Courier Corporation, Mineola, New York}
}

@manual{directional2026,
    title = {{Directional: A Collection of Functions for Directional Data Analysis}},
    author = {Michail Tsagris and Giorgos Athineou and Christos Adam and Zehao Yu
    and Anamul Sajib and Eli Amson and Micah J. Waldstein and Panagiotis Papastamoulis
    },
    year = {2026},
    note = {R package version 7.4},
  }

@article{npcirc2014,
  title = {{NPCirc: An R Package for Nonparametric Circular Methods}},
  author = {Mar{\'i}a Oliveira and Rosa M. Crujeiras and Alberto Rodr{\'i}guez-Casal},
  journal = {Journal of Statistical Software},
  year = {2014},
  volume = {61},
  number = {9},
  pages = {1--26},
  url = {https://www.jstatsoft.org/v61/i09/}
}

@book{cox1979,
  title={Theoretical statistics},
  author={Cox, D.R. and Hinkley, D.V.},
  year={1979},
  publisher={London: Chapman \& Hall/CRC}
}

@article{kent1988,
  title={{Maximum likelihood estimation for the wrapped Cauchy distribution}},
  author={Kent, John T and Tyler, David E},
  journal={Journal of Applied Statistics},
  volume={15},
  number={2},
  pages={247--254},
  year={1988},
  publisher={Taylor \& Francis}
}
\end{document}